\documentclass[11pt]{article}
\usepackage[latin1]{inputenc}
\usepackage{amsmath,amsthm,amssymb}
\usepackage{amsfonts}
\usepackage{amsmath,amsthm,amssymb,amscd}
\usepackage{latexsym}
\usepackage{color}
\usepackage{graphicx}
\usepackage{mathrsfs}
\usepackage{cite}

%%%%%%%%%%%%%%%%%%%%%%%%%%%%%%%%%%%%%%%%%%%%ÒýÓÃÎÄÏ×Ê½×Ó±êºì%%%%%%%%%%%%%%%%%%%%%%%%%%%%%%%%%%%%%%%%%%%%%%%
\usepackage{color,enumitem,graphicx}
\usepackage[colorlinks=true,urlcolor=black,
citecolor=black,linkcolor=black,linktocpage,pdfpagelabels,
bookmarksnumbered,bookmarksopen]{hyperref}

\textwidth172mm \textheight23cm \hoffset-24mm \voffset-20mm

\makeatletter \@addtoreset{equation}{section} \makeatother

\setlength{\parindent}{1em}

\newtheorem{theorem}{Theorem}[section]

\newtheorem{lemma}{Lemma}[section]
\newtheorem{remark}{Remark}[section]

\allowdisplaybreaks

\begin{document}

\title{\bf Normalized bound state solutions for the fractional Schr\"{o}dinger equation with potential\footnote{Supported by National Natural Science Foundation of China
(No.11971393) and Natural Science Foundation of Chongqing, China (cstc2020jcyjjqX0029).}}

\author
{Xin Bao,\ Ying Lv,\ Zeng-Qi Ou\footnote{Corresponding author. E-mail address:\, {bxinss@163.com} (X. Bao), {ly0904@swu.edu.cn} (Y. Lv),
{ouzengqi@swu.edu.cn} (Z.-Q. Ou).}\\
\footnotesize  School of Mathematics and Statistics, Southwest University, Chongqing, 400715, People's Republic of China}

\date{}
\maketitle

\baselineskip 17pt%»ù×¼Ïß

\begin{abstract}
{In this paper, we study the following fractional Schr\"{o}dinger equation with prescribed mass
\begin{equation*}
\left\{
\begin{aligned}
&(-\Delta)^{s}u=\lambda u+a(x)|u|^{p-2}u,\quad\text{in $\mathbb{R}^{N}$},\\
&\int_{\mathbb{R}^{N}}|u|^{2}dx=c^{2},\quad u\in H^{s}(\mathbb{R}^{N}),
\end{aligned}
\right.
\end{equation*}
where $0<s<1$, $N>2s$, $2+\frac{4s}{N}<p<2_{s}^{*}:=\frac{2N}{N-2s}$, $c>0$, $\lambda\in \mathbb{R}$ and $a(x)\in C^{1}(\mathbb{R}^{N},\mathbb{R}^{+})$ is a potential function. By using a minimax principle, we prove the existence of bounded state normalized solution under various conditions on $a(x)$.}
%×Ô¶¯¸üÐÂÊ±¼ä£¨·½±ãÐÞ¸Ä£©ÐÞ¸ÄÊ±¿ÉÓÃÓïÑÔÓï¾ä±êºì
%ÒýÀí×Ô¶¯±êºÅ£¬ÒýÓÃ(×Ô¶¯Ð±Ìå)
%ÓÃÖ¤Ã÷»·¾³¸øÄ©Î²¼Ó¿ò
%Ã¿¸öÒýÀíµÄ¹«Ê½¶ÀÁ¢ÅÅÐò
%ÒýÑÔ¹«Ê½ÓÃº¬ÒåÃüÃû
%²Î¿¼ÎÄÏ×ÒÔ×÷ÕßÃû×ÖÃüÃû

\smallskip
\emph{\bf Keywords:} Fractional Schr\"{o}dinger equation; Normalized solutions; Bound state; Potential
\end{abstract}

\section{Introduction and main results}
In this paper, we consider the following fractional Schr\"{o}dinger equation %involving the fractional laplacian
\begin{equation} \label{pr1}
\begin{aligned}
(-\Delta)^{s}u=\lambda u+a(x)|u|^{p-2}u,
\end{aligned}
\end{equation}
and possessing prescribed mass
\begin{equation}\label{pr2}
\int_{\mathbb{R}^{N}}|u|^{2}dx=c^{2},
\end{equation}
where  $0<s<1$, $N>2s$, $2+\frac{4s}{N}<p<2_{s}^{*}:=\frac{2N}{N-2s}$, $c>0$, and the function $a(x)$ satisfies the following assumptions:
%$N>2s$, $s\in(0,1)$ and $p\in(2+4s/N,2_{s}^{*})$, $c>0$.

$(A_{1})$ $a\in C^{1}(\mathbb{R}^{N},\mathbb{R}^{+})$, and $a_{*}:=\inf\limits_{x\in\mathbb{R}^{N}}a(x)$ with
$$
a_*>\max\Big\{\frac{N(N-2s)}{N^{2}-2s(N-2s)},\frac{2N}{Np-4s}\Big\};
$$

$(A_{2})$ $\lim\limits_{|x|\rightarrow\infty}a(x)=1$;

$(A_{3})$ $x\cdot\nabla a(x)\geqslant0$ for all $x \in \mathbb{R}^{N}$, with the strict inequality holds on a subset of positive Lebesgue

\ \ \ \ \ \ measure of $\mathbb{R}^{N}$;

$(A_{4})$ $Na(x)+x\cdot\nabla a(x)\leq N$ for all $x\in\mathbb{R}^{N}$;

$(A_{5})$ The map $W:x\mapsto x\cdot\nabla a(x)$ is in $L^{\infty}(\mathbb{R}^{N})$, and
\begin{equation}\label{W}
\|W\|_{\infty}\le \frac{(2N+p(2s-N))((Np-4s)a_*-2N)}{8(p-2)s};
\end{equation}

$(A_{6})$ $(1-a(x))\in L^{t_{1}}(\mathbb{R}^{N})$, and
\begin{equation}\label{1-a}
\|1-a(x)\|_{t_{1}}<\frac{(2^{1-\frac{4s}{N(p-2)}}-1)Np(p-2)}{(N(p-2)-4s)\|w_{c}\|_{t_{2}p}^{p}}m_{c},
\end{equation}

where
$t_{1}=\frac{2N}{2N-Np+4s}$, $t_{2}=\frac{2N}{Np-4s},$
$w_c$ and $m_c$ are defined in \eqref{lim2} and \eqref{mc} respectively.

The fractional Laplacian operator $(-\Delta)^{s}$ is defined by
$$
(-\Delta)^{s}u(x)=C_{N,s}\ P. V.\int_{\mathbb{R}^{N}}\frac{u(x)-u(y)}{|x-y|^{N+2s}}dy,\quad \text{in $\mathbb{R}^{N}$}
$$
for $u\in C_{0}^{\infty}(\mathbb{R}^{N})$, where $C_{N,s}$ is a suitable positive normalizing constant and $P.V.$ denotes the Cauchy principle value, see \cite{ED}.
%where $u:\mathbb{R}^{N}\rightarrow \mathbb{R}$ is sufficiently smooth, and $P.V.$ is an abbreviation for the Cauchy principal value and $C_{N, s}$ stands for a positive constant depending on $N$ and $s$(see \cite{ED}).

When we are looking for solutions of problem \eqref{pr1}, a possible choice is to consider that $\lambda\in \mathbb{R}$ is fixed, and to look for critical points of the functional $F_{\lambda}:H^{s}(\mathbb{R}^{N})\rightarrow \mathbb{R}$ (see e.g. \cite{RC,Pd,ZFS,WZ})
\begin{equation}\label{fla}
F_{\lambda}(u):=\frac{1}{2}\int_{\mathbb{R}^{N}}|(-\Delta)^{s/2}u|^{2}dx-\frac{\lambda}{2}\int_{\mathbb{R}^{N}}|u|^{2}dx-\frac{1}{p}\int_{\mathbb{R}^{N}}a(x)|u|^{p}dx
\end{equation}
with
$$
\int_{\mathbb{R}^{N}}|(-\Delta)^{s/2}u|^{2}d x=\int_{\mathbb{R}^{N}}\int_{\mathbb{R}^{N}} \frac{|u(x)-u(y)|^{2}}{|x-y|^{N+2 s}}dxdy,
$$
where $H^{s}(\mathbb{R}^{N})$ is a Hilbert space with the inner product and norm respectively
$$
\langle u,v\rangle=\int_{\mathbb{R}^{N}}(-\Delta)^{s/2}u(-\Delta)^{s/2}vdx+\int_{\mathbb{R}^{N}} uvdx,
$$
$$
\|u\|=\Big(\int_{\mathbb{R}^{N}}|(-\Delta)^{s/2}u|^{2}dx+\int_{\mathbb{R}^{N}}|u|^{2}dx\Big)^{1/2}.
$$

Another interesting way is to search for solutions with prescribed mass, that is, \eqref{pr2} holds, and $\lambda\in\mathbb{R}$ appears as a lagrange multiplier. This type of solution is called normalized solution, and can be obtained by looking for critical points of the functional
\begin{equation}\label{f}
F(u):=\frac{1}{2}\int_{\mathbb{R}^{N}}|(-\Delta)^{s/2} u|^{2}dx-\frac{1}{p}\int_{\mathbb{R}^{N}}a(x)|u|^{p}dx
\end{equation}
on the constraint %$L^2$-spheres in $H^s(\mathbb{R}^{N})$:
$$
S_{c}:=\{u\in H^{s}(\mathbb{R}^{N}):\|u\|_{2}=c\}.
$$
From the physical point of view, the normalized solution is particularly meaningful, since in addition to there is a conservation of mass, the mass has often an important physical meaning.

Considering the following Schr\"{o}dinger equation with a normalization constraint
\begin{equation}\label{pr3}
\left\{
\begin{aligned}
&(-\Delta)^s u+V(x)u=\lambda u+f(u), \quad\text{in $\mathbb{R}^{N}$}, \\
&\int_{\mathbb{R}^{N}}|u|^{2}dx=c^{2},\quad u\in H^{s}(\mathbb{R}^{N}).
\end{aligned}
\right.
\end{equation}

If $s=1$, in recent years, much attention has been paid to study problem \eqref{pr3}, see \cite{COA,TB,BB,LJ1,XFL,NS1,NS2,JCW} for $V(x)=1$ and \cite{CA,NI,RM,STC,TBR,YHD,ZWT} for $V(x)\neq $ constant. In particular, Ikoma and Miyamoto \cite{NI} studied normalized ground state solutions of problem \eqref{pr3} with the vanishing potential $V$ in the $L^{2}$-subcritical case. In the $L^{2}$-supercritical case, by using a new minimax argument, Bartsch et al. \cite{TBR} proved the existence of normalized solutions of problem \eqref{pr3} with the vanishing potential.

For $s\in(0,1)$ and $f(u)=|u|^{p-2}u$, the associated energy functional of \eqref{pr3} is given by
$$
F_{V}(u):=\frac{1}{2}\int_{\mathbb{R}^{N}}|(-\Delta)^{s/2}u|^{2}d x-\frac{1}{2}\int_{\mathbb{R}^{N}}V(x)|u|^{2}dx-\frac{1}{p}\int_{\mathbb{R}^{N}}|u|^{p}dx.
$$
From the variational point of view, $F_{V}$ is bounded from below on $S_{c}$ for $p\in (2, 2+\frac{4s}{N})$ ($L^{2}$-subcritical), and unbounded from below on $S_{c}$
 for $p\in (2+\frac{4s}{N}, 2^*_s)$ ($L^{2}$-supercritical). Here $2+\frac{4s}{N}$ is called the $L^{2}$-critical exponent, which comes from the Gagliardo-Nirenberg inequality \cite{Nirenberg1}.
%normalized solutions for Schr\"{o}dinger equations with fractional Laplacian operator,
There are many results about problem \eqref{pr3}, see \cite{HJL,PHZ,MDZ} for normalized solutions without potential, \cite{SBP} for normalized solutions with the vanishing potential, \cite{MD} for normalized solutions with the trapping potential, \cite{LTL} for normalized solutions with a ring-shaped potential, \cite{JBZ} for normalized solutions with a weak form of the steep well potential. %Particularly, Peng and Xia \cite{SBP} generalized the result of \cite{TBR} to the fractional case. %As for more investigations about normalized solutions of the fractional Laplace equations with potential, we refer the readers to \cite{MD,LTL,JBZ} and references therein. \cite{MD,LTL,HJL,PHZ,MDZ,JBZ}

Note that, all the results mentioned above are concerned with the problem \eqref{pr3} with autonomous nonlinearities. 
For the study of normalized solutions to the nonautonomous equations, i.e., the following equation
\begin{equation}\label{nonau}
\left\{
\begin{aligned}
&(-\Delta)^s u=\lambda u+a(x)f(u),\quad\text{in $\mathbb{R}^{N}$},\\
&\int_{\mathbb{R}^{N}}|u|^{2}dx=c^{2},\quad u\in H^{s}(\mathbb{R}^{N}).
\end{aligned}
\right.
\end{equation}
Chen and Tang \cite{STC} first considered \eqref{nonau} with $s=1$, and established the existence of normalized ground state solutions 
under suitable assumptions on $a(x)$ and $f$. Compared with the autonomous problems with a prescribed $L^2$-norm constraint, the main differences and difficulties of \cite{STC} are finding a new method to construct a Poho\v{z}aev-Palais-Smale sequence, and recover the compactness for this sequence. By using a technical condition about $a(x)$ which was introduced in \cite{DMC}, Alves \cite{C} considered the multiplicity of normalized solutions to problem \eqref{nonau} with $s=1$ in the $L^{2}$-subcritical case.
 Moreover, the above results are later generalized to the Kirchhoff type in $\mathbb{R}^3$ by Chen et al. \cite{SC} and the fractional case by Yang et al. \cite{CY}.
For more related results, see \cite{ZC,XJD,ZY} for more details.

%It is also worth pointing that in \cite{STC}, Chen and Tang first considered problem \eqref{pr3} with $s=1$, $V(x)=0$, $a(x)\neq $ constant, and established the existence of ground state solutions.

Nowadays, the study of problem \eqref{nonau} has attracted a lot interest.  However, as far as we know, there are only a few papers dealing with problem \eqref{nonau} (even for $s=1$) besides the one already mentioned above \cite{C,SC,STC,CY}. With regard to this point, inspired by \cite{STC} and \cite{RL}, in this work, we study normalized bound state solutions of problem \eqref{pr1}-\eqref{pr2}. %In some sense, our work can be seen as a complement of \cite{}.

\begin{theorem}\label{th1}
Assume that $(A_{1})$-$(A_{5})$ hold, and the supremum of $|1-a(x)|$ is sufficiently small,
then problem \eqref{pr1}-\eqref{pr2} has a solution $(u,\lambda)\in H^{s}(\mathbb{R}^{N})\times \mathbb{R}^{-}$.
\end{theorem}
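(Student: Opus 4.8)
The plan is to realize the solution as a critical point of $F$ restricted to $S_c$ via a minimax scheme, and then to pass from a Palais–Smale sequence to an actual solution by controlling the Lagrange multiplier and recovering compactness. First I would fix the geometric picture: since $p>2+\frac{4s}{N}$, the functional $F$ is unbounded from below on $S_c$, so instead of minimizing I would set up a mountain-pass-type minimax value
\[
\gamma_c:=\inf_{h\in\Gamma_c}\max_{t\in[0,1]}F(h(t)),
\]
where $\Gamma_c$ is a suitable class of paths in $S_c$ joining a small-norm element to one with $F<0$. The key quantitative inputs here are the fractional Gagliardo–Nirenberg inequality (which gives the mountain-pass geometry because $p$ is $L^2$-supercritical) and the comparison with the limiting autonomous problem: assumption $(A_2)$ gives $a(x)\to1$, so the limiting functional has the known minimax level $m_c$ attained by $w_c$ (as in \eqref{lim2}, \eqref{mc}). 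Condition $(A_6)$ and the smallness of $\sup|1-a(x)|$ serve precisely to guarantee the strict inequality $\gamma_c<m_c$, which is the energy threshold below which compactness can be restored.

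Next I would produce a good Palais–Smale sequence. Because we work on the constraint $S_c$, it is standard (following the approach of Bartsch–Soave / Chen–Tang, as the excerpt references \cite{STC,RL}) to augment the functional with the scaling variable, working on $\mathbb{R}\times H^s(\mathbb{R}^N)$ via $u\mapsto s\star u:=e^{sN/2}u(e^s\cdot)$, and to run the minimax on this enlarged space so that the resulting Palais–Smale sequence $(u_n)$ additionally satisfies the Pohozaev identity asymptotically, i.e. $P(u_n)\to0$, where $P$ is the Pohozaev functional associated to \eqref{pr1}. Combining $F(u_n)\to\gamma_c$, $F'|_{S_c}(u_n)\to0$, and $P(u_n)\to0$, and using $(A_3)$–$(A_4)$ (which make the term $\int x\cdot\nabla a\,|u|^p$ have a favorable sign in the Pohozaev relation) together with the lower bounds on $a_*$ in $(A_1)$ and the $L^\infty$ bound \eqref{W} in $(A_5)$, I would show $(u_n)$ is bounded in $H^s$ and that the Lagrange multipliers $\lambda_n$, defined by $F'(u_n)-\lambda_n u_n\to0$, are bounded and bounded away from $0$, in fact $\lambda_n\to\lambda<0$ along a subsequence. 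The sign $\lambda<0$ is exactly what lets us conclude $\lambda\in\mathbb{R}^-$ in the statement, and it is forced by the Pohozaev/constraint algebra in the $L^2$-supercritical regime.

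Then I would pass to the limit. Up to a subsequence $u_n\rightharpoonup u$ in $H^s(\mathbb{R}^N)$; the weak limit $u$ solves $(-\Delta)^s u=\lambda u+a(x)|u|^{p-2}u$ with the above $\lambda<0$. To upgrade to strong convergence in $L^2$ (hence $\|u\|_2=c$, so $u\in S_c$ and the constraint is genuinely attained), I would rule out vanishing and dichotomy: vanishing is excluded because $\gamma_c>0$, and dichotomy is excluded by the strict inequality $\gamma_c<m_c$ — any mass escaping to infinity would, by $(A_2)$, carry at least energy $m_c$ for the limiting problem, contradicting the level. Here the non-autonomous term is handled by splitting $a(x)=1+(a(x)-1)$ and using $(A_6)$ ($1-a\in L^{t_1}$) plus Hölder to show the perturbation term converges; the smallness of $\sup|1-a|$ and of $\|1-a\|_{t_1}$ in \eqref{1-a} is what makes the threshold comparison $\gamma_c<m_c$ strict enough to beat the split. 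Once $u_n\to u$ strongly, $F(u)=\gamma_c$, $u\in S_c$, and $(u,\lambda)$ solves \eqref{pr1}–\eqref{pr2} with $\lambda<0$.

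The main obstacle I expect is the compactness step: unlike the autonomous case there is no translation invariance to exploit for the concentration-compactness argument applied directly to $(u_n)$, so one must carefully quantify the energy lost to a "bubble at infinity," show that bubble is governed by the limiting problem with level $m_c$, and combine this with the strict inequality $\gamma_c<m_c$. Getting that strict inequality — constructing a competitor path built from $w_c$ and estimating $F$ along it using only the smallness hypotheses $(A_6)$ and $\sup|1-a(x)|$ small, while simultaneously respecting $(A_3)$–$(A_5)$ — is the delicate quantitative heart of the proof; everything else (mountain-pass geometry, boundedness of PS sequences, sign of $\lambda$) is comparatively routine once the Pohozaev-augmented framework is in place.
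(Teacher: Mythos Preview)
Your proposal contains a fundamental error in the direction of the key energy inequality, and this forces the whole strategy to fail.

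Under $(A_3)$--$(A_4)$ one has $a(x)\le 1$ for all $x$ (with strict inequality somewhere), hence
\[
F(u)=F_\infty(u)+\frac{1}{p}\int_{\mathbb{R}^N}(1-a(x))|u|^p\,dx \;\ge\; F_\infty(u)\quad\text{for every }u.
\]
Consequently any minimax level for $F$ on $S_c$ built from paths (or more general linking sets) dominates the corresponding level for $F_\infty$, so your claimed inequality $\gamma_c<m_c$ is impossible. Your compactness argument hinges on exactly this wrong inequality: with $\gamma_c\ge m_c$, a single bubble at infinity carrying energy $m_c$ and weak limit $v=0$ is perfectly compatible with the level, and a one--dimensional mountain pass gives you no mechanism to exclude it. You are, in effect, reproducing the ground--state scheme of \cite{STC} (where $a\ge a_\infty$), whereas here the situation is the opposite and one is after a \emph{bound state} above the ground--state level. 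Note also that Theorem~\ref{th1} does not assume $(A_6)$; the smallness of $\sup|1-a|$ plays a different role than you assign it.

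The paper's route is genuinely different. It sets up a \emph{linking} (not mountain pass) over maps $\gamma:Q=B_R(0)\times[h_1,h_2]\to S_c$ with $\gamma|_{\partial Q}(y,h)=h\star w_c(\cdot-y)$, and uses a barycenter map $\beta$ and a degree argument to show that every $\gamma\in\Gamma_c$ intersects $\{\beta=0\}$; on that set $\max F>m_c$ strictly (Lemma~\ref{lm}). This yields the lower bound $m_{a,c}>m_c$. The smallness of $\sup|1-a|$ is used only for the \emph{upper} bound $m_{a,c}<2m_c$ (Lemma~\ref{sup}). Compactness is then obtained from the splitting Lemma together with \emph{both} inequalities: $m_{a,c}>m_c$ rules out the scenario $v=0$, $k=1$; $m_{a,c}<2m_c$ rules out $k\ge 2$; and the remaining case $v\neq 0$, $k\ge 1$ is excluded by a quantitative comparison involving $m_\alpha$, $m_\beta$, the estimate $-\lambda c^2<\delta_0$ from Lemma~\ref{psbound}, and the elementary inequality quoted from \cite{TBR}. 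None of this structure is present in your outline.
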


\begin{theorem}\label{th2}
Assume that $(A_{1})$-$(A_{6})$ hold, then problem \eqref{pr1}-\eqref{pr2} has a solution $(u,\lambda)\in H^{s}(\mathbb{R}^{N})\times \mathbb{R}^{-}$.
\end{theorem}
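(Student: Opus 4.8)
\textbf{Proof strategy for Theorem~\ref{th2}.}
The plan is to realise the solution as a critical point of $F$ on $S_{c}$ at a minimax level trapped strictly between the ground-state level $m_{c}$ of the limit problem and $2m_{c}$, and to extract compactness from this two-sided bound together with the sign hypotheses on $a$. Throughout I use the mass-preserving dilation $u\star t:=t^{N/2}u(t\,\cdot)$, for which $\|u\star t\|_{2}=\|u\|_{2}$, $\|(-\Delta)^{s/2}(u\star t)\|_{2}^{2}=t^{2s}\|(-\Delta)^{s/2}u\|_{2}^{2}$ and $\int_{\mathbb R^{N}}a|u\star t|^{p}=t^{N(p-2)/2}\int_{\mathbb R^{N}}a(\cdot/t)|u|^{p}$. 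As $p>2+\tfrac{4s}{N}$ gives $\tfrac{N(p-2)}{2}>2s$ and $a\ge a_{*}>0$, the fiber $t\mapsto F(u\star t)$ is positive and small as $t\to0^{+}$ and $\to-\infty$ as $t\to+\infty$; $(A_{1})$ and $(A_{4})$ force it to have a unique, strict interior maximum, so $F|_{S_{c}}$ has mountain-pass geometry with Poho\v{z}aev constraint $\mathcal P_{c}=\{u\in S_{c}:P(u)=0\}$, where $P(u)=\tfrac{d}{dt}F(u\star t)|_{t=1}=s\|(-\Delta)^{s/2}u\|_{2}^{2}-\tfrac{N(p-2)}{2p}\int a|u|^{p}+\tfrac1p\int(x\cdot\nabla a)|u|^{p}$. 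Using the limit ground state $w_{c}$ (cf.~\eqref{lim2}) and its dilations as reference path, I define $m_{c}(a)$ as the associated minimax value, set up so that $m_{c}(a)>m_{c}$ strictly (for instance by constraining the barycentre of the midpoints of competing paths; the strict gap comes from $(A_{3})$, which forces $a<1$ on a set of positive Lebesgue measure, together with $w_{c}>0$, so that $\int(1-a)|w_{c}|^{p}>0$). For the upper bound, $F(w_{c}\star t)=F_{\infty}(w_{c}\star t)+\tfrac1p\int(1-a)|w_{c}\star t|^{p}$, and H\"{o}lder with the conjugate exponents $\tfrac1{t_{1}}+\tfrac1{t_{2}}=1$ gives $\int(1-a)|w_{c}\star t|^{p}\le t^{\frac{N(p-2)}{2}+\frac{N}{t_{1}}}\|1-a\|_{t_{1}}\|w_{c}\|_{pt_{2}}^{p}=t^{2s}\|1-a\|_{t_{1}}\|w_{c}\|_{pt_{2}}^{p}$, the $t$-exponent collapsing to $2s$ exactly because $t_{1}=\tfrac{2N}{2N-Np+4s}$. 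Maximising the resulting majorant of $F(w_{c}\star t)$ over $t>0$ and inserting the Poho\v{z}aev identity satisfied by $w_{c}$, hypothesis \eqref{1-a} is precisely what gives $m_{c}(a)\le\max_{t>0}F(w_{c}\star t)<2m_{c}$.

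With the geometry in place, I would construct a Poho\v{z}aev--Palais--Smale sequence $(u_{n})\subset S_{c}$, that is, $F(u_{n})\to m_{c}(a)$, $(u_{n})$ Palais--Smale for $F|_{S_{c}}$, and $P(u_{n})\to0$, via Jeanjean's auxiliary functional $(\tau,u)\mapsto F(u\star e^{\tau})$ on $\mathbb R\times H^{s}(\mathbb R^{N})$, carried over to the fractional setting. Boundedness of $(u_{n})$ in $H^{s}$ uses $(A_{1})$ and \eqref{W}: from the identity $2sF(u_{n})-P(u_{n})=\tfrac{N(p-2)-4s}{2p}\int a|u_{n}|^{p}-\tfrac1p\int(x\cdot\nabla a)|u_{n}|^{p}$, whose left side tends to $2s\,m_{c}(a)$, together with $\int(x\cdot\nabla a)|u_{n}|^{p}\le\|W\|_{\infty}a_{*}^{-1}\int a|u_{n}|^{p}$, the inequality $N(p-2)>4s$ and \eqref{W}, one bounds $\int a|u_{n}|^{p}$, hence $\int|u_{n}|^{p}$ and, through $P(u_{n})\to0$, $\|(-\Delta)^{s/2}u_{n}\|_{2}$; with $\|u_{n}\|_{2}=c$ this gives $\sup_{n}\|u_{n}\|<\infty$. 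Passing to a subsequence, $u_{n}\rightharpoonup u$ in $H^{s}$, the Lagrange multipliers $\lambda_{n}$ (to leading order $c^{-2}(\|(-\Delta)^{s/2}u_{n}\|_{2}^{2}-\int a|u_{n}|^{p})$) converge to some $\lambda$, and $u$ solves $(-\Delta)^{s}u=\lambda u+a|u|^{p-2}u$. The sign $\lambda<0$ is obtained by combining the (limiting) Nehari and Poho\v{z}aev identities with $(A_{3})$: eliminating $\int a|u|^{p}$ gives $\lambda c^{2}\le\tfrac{N(p-2)-2sp}{N(p-2)}\|(-\Delta)^{s/2}u\|_{2}^{2}$, whose coefficient is negative because $p<2_{s}^{*}$, while $m_{c}(a)>0$ keeps $\|(-\Delta)^{s/2}u\|_{2}$ away from $0$.

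The heart of the proof is compactness: that $u\neq0$ and $\|u\|_{2}=c$, whence $u_{n}\to u$ in $H^{s}$ and $(u,\lambda)\in H^{s}(\mathbb R^{N})\times\mathbb R^{-}$ is the claimed solution. I would run a Brezis--Lieb/concentration-compactness splitting of $(u_{n})$, legitimate thanks to $(A_{2})$ (so that $a(\cdot+y_{n})\to1$ and $1-a$ vanishes at infinity). Vanishing is impossible: $P(u_{n})\to0$ would then force $\|(-\Delta)^{s/2}u_{n}\|_{2}\to0$ and $F(u_{n})\to0\neq m_{c}(a)$. A dichotomy into two or more nontrivial profiles is impossible as well: at least two ``bubbles'' would carry total energy $\ge2m_{c}$, contradicting $m_{c}(a)<2m_{c}$. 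The one remaining scenario, a single full-mass bubble escaping to infinity along $(y_{n})$, with $u_{n}(\cdot+y_{n})\to v$ solving the limit problem and $F_{\infty}(v)=m_{c}(a)$, must also be excluded; this is where the strict lower bound $m_{c}(a)>m_{c}$, the monotonicity $(A_{3})$ (so that the nonnegative term $\tfrac1p\int(x\cdot\nabla a)|\cdot|^{p}$ in $P$ stays under control across the translation) and the relation $P(u_{n})\to0$ must be used together, forcing $v$ to have the ground-state profile and hence $F_{\infty}(v)=m_{c}$, a contradiction. Therefore $u\neq0$, no mass is lost, and the proof closes.

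This last point --- excluding the escape of the Poho\v{z}aev--Palais--Smale sequence to infinity, where it would produce a solution of the limit equation rather than of \eqref{pr1} --- is the main obstacle; it is the fractional, $L^{2}$-constrained analogue of the compactness difficulty stressed for $s=1$ in \cite{STC}, and it is exactly where the full strength of the strict inequalities $m_{c}<m_{c}(a)<2m_{c}$ and of the sign conditions $(A_{3})$--$(A_{4})$ is needed. Theorem~\ref{th1} is proved by the same scheme, the quantitative assumption $(A_{6})$ being replaced by smallness of $\sup|1-a|$, which still secures $m_{c}(a)<2m_{c}$.
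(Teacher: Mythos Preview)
Your outline captures the general architecture (minimax level strictly between $m_{c}$ and $2m_{c}$, Poho\v{z}aev--Palais--Smale sequence via Jeanjean's auxiliary functional, splitting lemma), and your derivation of $m_{c}(a)<2m_{c}$ from $(A_{6})$ is essentially the paper's computation. However, there is a genuine gap in the compactness step, and it is precisely the step the paper works hardest on.

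Your trichotomy ``vanishing / two or more bubbles / single full-mass escaping bubble'' omits the case that actually causes trouble: the weak limit $v$ is nontrivial but $\|v\|_{2}=\rho<c$, so that $v$ solves the \emph{nonautonomous} equation with mass $\rho$, while one or more bubbles $u^{j}$ carry the remaining mass. In this situation your energy bookkeeping breaks down: the bubbles satisfy $F_{\infty}(u^{j})\ge m_{\alpha_{j}}\ge m_{c}$, but you have no a priori lower bound on $F(v)$ (it is not $\ge m_{c}$, since $v$ neither has mass $c$ nor solves the limit equation). The paper closes this case by a delicate comparison: it rewrites $F(v)=F_{\lambda}(v)+\tfrac{\lambda}{2}\rho^{2}$, uses $F_{\lambda}(v)=\max_{t>0}F_{\lambda}(tv)\ge\max_{t>0}F_{\infty,\lambda}(tv)\ge F_{\infty,\lambda}(w_{\beta})=m_{\beta}-\tfrac{\lambda}{2}\beta^{2}$ with $\beta$ determined by $\lambda_{\beta}=\lambda$, and is then left with $2m_{c}>m_{\alpha}+m_{\beta}+\tfrac{\lambda}{2}(c^{2}-\alpha^{2}-\beta^{2})$. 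To extract a contradiction from this one needs not only $\lambda<0$ but the \emph{quantitative} two-sided bound $-\delta_{0}/c^{2}<\lambda<0$ with $\delta_{0}/m_{c}\le 3\theta$; this is exactly where $(A_{5})$ enters, and the contradiction is obtained from the elementary inequality $\min\{x^{-\theta/2}+y^{-\theta/2}+A(x+y):x,y>0,\ x+y\le1\}\ge\tfrac{3}{2}\theta+2$ for $A>\theta$ (borrowed from \cite{TBR}). None of this appears in your proposal; your argument for $\lambda<0$ gives no upper bound on $|\lambda|$, and without it the ``partial mass'' scenario cannot be ruled out.

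A secondary issue is the lower bound $m_{c}(a)>m_{c}$. A mountain-pass class of one-dimensional paths together with a barycentre constraint ``at midpoints'' does not give a topological obstruction. The paper instead sets up a genuine linking over $Q=B_{R}(0)\times[h_{1},h_{2}]$ with boundary data $h\star w_{c}(\cdot-y)$; a Brouwer-degree argument on $y\mapsto\beta(\gamma(y,h))$ forces every admissible $\gamma$ to hit the set $\{\beta=0\}$, and it is on this set that the strict inequality $L_{c}>m_{c}$ is proved (via a Poho\v{z}aev--PS argument for $F_{\infty}$ on $\mathcal{D}_{0}$ that ultimately yields $v_{n}\to\pm w_{c}$). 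Your sketch does not supply a mechanism of this kind.
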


\begin{remark}
{\rm We point out that the supremum of $|1-a(x)|$ in Theorem \ref{th1} can be made explicit, see \eqref{a7}. }
\end{remark}

%\begin{remark}%È¡ÏûÐ±Ìå
%{\rm By $(A_3)$ and $(A_4)$, we have $a(x)<1$ on a subset of positive Lebesgue
% measure of $\mathbb{R}^{N}$, thus for $0<u \in H^{s}(\mathbb{R}^{N})$, $\int_{\mathbb{R}^N}(1-a(x))|u|^pdx>0$, see Lemma \ref{lm}.}
%\end{remark}

\begin{remark}%È¡ÏûÐ±Ìå
{\rm Different from \cite{STC}, where the authors considered $a(x)\geq a_\infty$ and the existence of ground state solutions, while, in this paper, we focus on the case $a(x)\leq a_\infty$, and obtain a bound state solution.}
\end{remark}

\begin{remark}%È¡ÏûÐ±Ìå
{\rm Let us give a brief illustration of our proof.

Step 1: Under the conditions of $a(x)$, we prove that $F$ has a linking geometry;

Step 2: Borrowing the idea of \cite{LJ}, and using a new minimax principle introduced in \cite{NG}, we construct a bounded Poho\v{z}aev-Palais-Smale sequence;

Step 3: By using the splitting Lemma, we prove the convengence.} %We point that although we take a similar idea of \cite{LJ} and \cite{SBP}, the extra difficulty still occurs due to the }%By $(A_3)$ and $(A_4)$, we have $a(x)<1$ on a subset of positive Lebesgue
 %measure of $\mathbb{R}^{N}$, thus for $0<u \in H^{s}(\mathbb{R}^{N})$, $\int_{\mathbb{R}^N}(1-a(x))|u|^pdx>0$, see Lemma \ref{lm}.}
\end{remark}

The paper is organized as follows. Section \ref{pre} contains some preliminaries. In Section \ref{minimax}, we use a minimax principle to construct a Poho\v{z}aev-Palais-Smale sequence. Section \ref{proof} is devoted to the proofs of Theorems \ref{th1} and \ref{th2}.
Throughout the paper, we will use the notation $\|\cdot\|_q:=\|\cdot\|_{L^q(\mathbb{R}^N)}$, $q\in (1,\infty)$, $B_{R}(y)=\{x \in \mathbb{R}^{N}:|x-y|\leq R\}$ is the closed ball of radius $R$ around $y$, $C,C_i,i\in \mathbb{N}^+$ denote positive constants possibly different from line to line.

\section{Preliminaries}\label{pre}
%For $s\in(0,1)$ and $N>2s$, the fractional Sobolev space of order $s$ on $\mathbb{R}^{N}$ is defined by
%$$
%H^s(\mathbb{R}^{N})=\left\{u \in L^2(\mathbb{R}^{N}): \int_{\mathbb{R}^{N}} \int_{\mathbb{R}^{N}} \frac{|u(x)-u(y)|^{2}}{|x-y|^{N+2 s}}dxdy<\infty\right\},
%$$
%endowed with the norm
%$$
%\|u\|=\left(\int_{\mathbb{R}^{N}}|(-\Delta)^{s / 2} u|^{2}dx+\int_{\mathbb{R}^{N}} u^{2}dx\right)^{1/2},
%$$
%where
%$$
%\int_{\mathbb{R}^{N}}|(-\Delta)^{\frac{s}{2}} u|^{2}d x=\int_{\mathbb{R}^{N}}\int_{\mathbb{R}^{N}} \frac{|u(x)-u(y)|^{2}}{|x-y|^{N+2 s}}dxdy.
%$$
%From \cite{ED}, the embedding $H^s(\mathbb{R}^{N})\hookrightarrow L^q(\mathbb{R}^{N})$ is continuous for any $q\in (2,2^*_s)$, where $2^*_s=\frac{2N}{N-2s}$ is the fractional critical Sobolev exponent. Moreover, there is a positive constant $C=C(q,N,s)$ such that
%$$
%\|u\|_{q}\leq C \|u\|
%$$
%for any $u\in H^s(\mathbb{R}^{N})$, where $\|\cdot\|_q$ denotes the norm of the Lebesgue space $L^q(\mathbb{R}^{N})$.

%Normalized solutions of problem \eqref{p} are obtained by looking for critical point the following $C^1$ functional
%\begin{equation}\label{f}
%F(u)=\frac{1}{2}\int_{\mathbb{R}^{N}}|(-\Delta)^{s/2} u|^{2}dx-\frac{1}{p}\int_{\mathbb{R}^{N}}a(x)|u|^{p}dx
%\end{equation}
%constrained on the $L^2$-spheres in $H^s(\mathbb{R}^{N})$:
%$$
%S_{c}:=\{ u \in H^{s}(\mathbb{R}^{N}):\|u\|_{2}=c\},
%$$
%where $c>0$ is any fixed.

%In order to formulate our results we need to introduce some notation.
By \cite{RF}, for $p\in(2+\frac{4s}{N},2_{s}^{\ast})$, the following fractional Schr\"{o}dinger equation:
\begin{equation}\label{lim1}
(-\Delta)^{s}u=-u+u^{p-1},\quad x\in\mathbb{R}^{N},\quad u>0,
\end{equation}
has an unique solution $w\in H^{s}(\mathbb{R}^{N})$, which is radial and radially decreasing.
Then for $c>0$, one obtains a solution $(w_{c}$, $\lambda_{c})\in H_{\text {rad }}^{s}(\mathbb{R}^{N})\times \mathbb{R}^{-}$ of the following equation
\begin{equation}\label{lim2}
\left\{\begin{array}{l}
(-\Delta)^{s} w_{c}=\lambda_{c}w_{c}+|w_{c}|^{p-2}w_{c} \\
w_{c}>0,\quad\|w_{c}\|_{2}=c,
\end{array}\right.
\end{equation}
by scaling:
$$
w_{c}(x):=(-\lambda_{c})^{\frac{1}{p-2}} w((-\lambda_{c})^{\frac{1}{2s}}x),
$$
where $\lambda_{c}$ is determined by
\begin{equation}\label{lambdac}
-\lambda_{c}=\left(\frac{c}{c_{0}}\right)^{-\theta-2}>0,\quad c_{0}:=\|w\|_{2}, \quad \theta:=\frac{4N-2p(N-2s)}{N(p-2)-4s}>0,
\end{equation}
and $H_{\mathrm{rad}}^{s}(\mathbb{R}^{N})$ is the subset of the radially symmetric functions in $H^{s}(\mathbb{R}^{N})$.
%Hence, we conclude that
%$$
%c^{2}=\|w_{c}\|_{2}^{2}=(-\lambda_{c})^{-\frac{1}{2s}(N-\frac{4s}{p-2})}\|w\|_{2}^{2}=(-\lambda_{c})^{-\frac{1}{2s}(N-\frac{4s}{p-2})}c_{0}^{2}
%$$
%where . Taking
%\begin{equation}\label{theta}
%,
%\end{equation}
%it follows that
The solution $w_{c}$ of \eqref{lim2} is a critical point, in fact a mountain pass critical point of
\begin{equation}\label{finfty}
F_{\infty}(u):=\frac{1}{2}\int_{\mathbb{R}^{N}}|(-\Delta)^{s/2}u|^{2}dx-\frac{1}{p}\int_{\mathbb{R}^{N}}|u|^{p}dx
\end{equation}
constrained to $S_{c}$. For any $c>0$, setting
\begin{equation}\label{mc}
m_{c}:=F_{\infty}(w_{c})\ \ \mbox{and}\ \ m_{c_{0}}=F_{\infty}(w),
\end{equation}
one has
\begin{equation}\label{mm}
\begin{aligned}
m_{c}&=\frac{1}{2}\int_{\mathbb{R}^{N}}|(-\Delta)^{s/2}w_{c}|^{2}dx-\frac{1}{p}\int_{\mathbb{R}^{N}}|w_{c}|^{p}dx\\
&=(-\lambda_{c})^{\frac{p}{p-2}-\frac{N}{2s}}\Big(\frac{1}{2}\int_{\mathbb{R}^{N}}|(-\Delta)^{s/2}w|^{2}dx-\frac{1}{p}\int_{\mathbb{R}^{N}}|w|^{p}dx\Big)\\
&=(-\lambda_{c})^{\frac{p}{p-2}-\frac{N}{2s}}m_{c_{0}}\\
&=\left(\frac{c}{c_{0}}\right)^{-\theta}m_{c_{0}}.
\end{aligned}
\end{equation}
%Hence, $m_c$ is decreasing in $c$, and {\color {red}we have}
%\begin{equation*}
 % F
%\end{equation*}

Now we recall the notion of barycentre of a function $u \in H^{s}(\mathbb{R}^{N})\backslash\{0\}$ which has been introduced in \cite{TBT,GC}. Setting
$$
\nu(u)(x)=\frac{1}{|B_{1}(0)|}\int_{B_{1}(x)}|u(y)|dy.
$$
We can observe that $\nu(u)$ is bounded and continuous, and then, the function
$$
\widehat{u}(x)=\Big(\nu(u)(x)-\frac{1}{2} \max \nu(u)\Big)^{+}
$$
is well defined. Moreover, $\widehat{u}$ is continuous and has a compact support. Therefore, we can define $\beta: H^{s}(\mathbb{R}^{N})\backslash\{0\}\rightarrow\mathbb{R}^{N}$ as
$$
\beta(u)=\frac{1}{\|\widehat{u}\|_{1}} \int_{\mathbb{R}^{N}} \widehat{u}(x)xdx.
$$
The map $\beta$ is well defined, since $\widehat{u}$ has a compact support, and one can verify that it satisfies the following properties:

$(i)$ $\beta$ is continuous in $H^{s}(\mathbb{R}^{N})\backslash\{0\}$;

$(ii)$ if $u$ is a radial function, then $\beta(u)=0$;

$(iii)$ $\beta(tu)=\beta(u)$ for any $t\neq 0$ and $u\in H^{s}(\mathbb{R}^{N})\backslash\{0\}$;

$(iv)$ letting $u_{z}(x)=u(x-z)$ for any $z\in\mathbb{R}^{N}$ and $u\in H^{s}(\mathbb{R}^{N})\backslash\{0\}$, there holds $\beta\left(u_{z}\right)=\beta(u)+z$.

%The following lemma which is a special case of .

\begin{lemma}\cite[Lemma 4.5]{NG}\label{mini}
Let $M$ be a Hilbert manifold and $\mathcal{I}\in C^{1}(M,\mathbb{R})$ be a given functional. Let  $K\subset M$ be compact and consider a subset
$$
\mathcal{G}\subset\{G\subset M:G\text{ is compact, }K\subset G\}
$$
which is homotopy-stable, that is, it is invariant with respect to deformations leaving $K$ fixed. Assume that
$$
\max_{u \in K}\mathcal{I}(u)<e:=\inf_{G\in\mathcal{G}}\max_{u \in G}\mathcal{I}(u)\in \mathbb{R}.
$$
If $\tau_{n}\in \mathbb{R}$ such that $\tau_{n}\rightarrow 0$ and $G_{n}\in \mathcal{G}$ is a sequence such that
$$
0\leq\max_{u\in G_{n}}\mathcal{I}(u)-e\leq\tau_{n}.
$$
Then there exists a sequence $\{u_{n}\}\subset M$ such that

$(1)$ $|\mathcal{I}(u_{n})-e|\leq\tau_{n}$,

$(2)$ $\|\nabla_{M}\mathcal{I}(u_{n})\|\leq C\sqrt{\tau_{n}}$,

$(3)$ $\operatorname{dist}(u_{n}, G_{n})\leq C\sqrt{\tau_{n}}$,\\
for some constant $C>0$.
\end{lemma}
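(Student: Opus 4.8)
The statement is a quantitative minimax principle: if the minimax level $e$ lies strictly above $\max_{K}\mathcal I$, then near any nearly minimizing family $G_{n}$ (with $\max_{G_{n}}\mathcal I\to e$) there sits a point at which $\mathcal I\approx e$ and $\|\nabla_{M}\mathcal I\|$ is small, quantitatively. I would prove that a point $u_{n}$ satisfying $(1)$--$(3)$ exists for every index $n$ with $\tau_{n}$ small enough that $\max_{K}\mathcal I<e-\tau_{n}$ --- hence for all large $n$, since $\tau_{n}\to0$; the finitely many remaining indices play no role in the applications. Arguing by contradiction, fix such an $n$ and suppose that no $u\in M$ satisfies $(1)$--$(3)$; we may also assume $\max_{u\in G_{n}}\mathcal I(u)<e+\tau_{n}$ (otherwise slightly enlarge $\tau_{n}$, which is harmless). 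Fix a universal constant $C:=3$ and set
\[
\mathcal A_{n}:=\bigl\{u\in M:\ |\mathcal I(u)-e|\le\tau_{n}\bigr\}\cap\bigl\{u\in M:\ \operatorname{dist}(u,G_{n})\le C\sqrt{\tau_{n}}\bigr\}.
\]
Then the failure of $(2)$ forces $\|\nabla_{M}\mathcal I(u)\|>C\sqrt{\tau_{n}}$ for every $u\in\mathcal A_{n}$, so $\mathcal I$ has no critical point in $\mathcal A_{n}$ and, since $\mathcal I\in C^{1}$, there is a locally Lipschitz pseudo-gradient vector field $X$ for $\mathcal I$, tangent to $M$, on a neighbourhood of $\mathcal A_{n}$, with $\|X\|\le 2\|\nabla_{M}\mathcal I\|$ and $\langle d\mathcal I,X\rangle\ge\|\nabla_{M}\mathcal I\|^{2}$ (Palais).

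The heart of the plan is a descent deformation. Pick a locally Lipschitz cut-off $\chi\colon M\to[0,1]$ equal to $1$ on a suitable inner tube containing $G_{n}\cap\{\mathcal I\ge e-\tfrac{\tau_{n}}{2}\}$ (possible since $\max_{G_{n}}\mathcal I<e+\tau_{n}$) and whose support lies in the interior of $\mathcal A_{n}$, and let $\sigma(\cdot,u)$ solve $\dot\sigma=-\chi(\sigma)X(\sigma)/\|X(\sigma)\|$, $\sigma(0,u)=u$ (with $\dot\sigma\equiv0$ where $\chi=0$). As the right-hand side has norm $\le1$ and $M$ is complete, $\sigma$ is globally defined, $\sigma(\cdot,u)\equiv u$ for $u\notin\operatorname{supp}\chi$, $\operatorname{dist}(\sigma(t,u),u)\le t$, and
\[
\tfrac{d}{dt}\mathcal I(\sigma)=-\chi(\sigma)\tfrac{\langle d\mathcal I(\sigma),X(\sigma)\rangle}{\|X(\sigma)\|}\le-\tfrac12\chi(\sigma)\|\nabla_{M}\mathcal I(\sigma)\|\le0,
\]
the last quantity being $<-\tfrac{C}{2}\sqrt{\tau_{n}}$ wherever $\chi(\sigma)=1$. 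Put $T:=\tfrac{C}{2}\sqrt{\tau_{n}}$ and $\eta(t,u):=\sigma(Tt,u)$. For $u\in G_{n}$, so $\mathcal I(u)<e+\tau_{n}$, I would argue: if $\mathcal I(\sigma(t,u))<e-\tfrac{\tau_{n}}{2}$ for some $t\le T$, then $\mathcal I(\eta(1,u))<e-\tfrac{\tau_{n}}{2}<e$ by monotonicity; otherwise $\sigma(\cdot,u)$ stays in the inner tube on $[0,T]$ --- its distance to $G_{n}$ being $\le t\le\tfrac{C}{2}\sqrt{\tau_{n}}$ and its energy staying between $e-\tfrac{\tau_{n}}{2}$ and $\mathcal I(u)$ --- so $\chi\equiv1$ along it and $\mathcal I(\eta(1,u))<\mathcal I(u)-\tfrac{C}{2}\sqrt{\tau_{n}}\cdot T<e+\tau_{n}-\tfrac{9}{4}\tau_{n}<e$. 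Either way $\max_{\eta(1,G_{n})}\mathcal I<e$.

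It remains to close the loop. Since $\max_{K}\mathcal I<e-\tau_{n}$ and $\mathcal A_{n}\subset\{\mathcal I\ge e-\tau_{n}\}$, we have $K\cap\mathcal A_{n}=\emptyset$, hence $K\cap\operatorname{supp}\chi=\emptyset$ and $\eta(t,\cdot)$ fixes $K$ pointwise. Thus $\eta(1,\cdot)$ is continuous, $\eta(1,G_{n})$ is compact, $K=\eta(1,K)\subset\eta(1,G_{n})$, and $\eta(1,G_{n})$ is obtained from $G_{n}$ by a deformation leaving $K$ fixed, so $\eta(1,G_{n})\in\mathcal G$ by homotopy-stability; therefore $e\le\max_{\eta(1,G_{n})}\mathcal I<e$, a contradiction, which proves the claim with $C=3$. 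The main obstacle is precisely this parameter balancing: the flow moves a distance of order $T$ yet lowers $\mathcal I$ only by order $T\cdot\|\nabla_{M}\mathcal I\|$, so pushing $G_{n}$ --- perched at height $e+O(\tau_{n})$ --- strictly below $e$ while keeping trajectories within distance $O(\sqrt{\tau_{n}})$ of $G_{n}$, which is the only region where $\|\nabla_{M}\mathcal I\|$ is controlled, forces $T\sim\sqrt{\tau_{n}}$ and a gradient threshold $\sim\sqrt{\tau_{n}}$; this square-root trade-off between ``height'' and ``length'' is exactly what produces the $\sqrt{\tau_{n}}$ in $(2)$ and $(3)$, and getting a single universal $C$ to reconcile the support, descent-rate, and displacement requirements --- while carrying out the pseudo-gradient and flow constructions intrinsically on the manifold $M$ (completeness, tangency) --- is the delicate point.
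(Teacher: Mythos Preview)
The paper does not give its own proof of this lemma; it is quoted verbatim as \cite[Lemma~4.5]{NG} and used as a black box. Your argument is precisely the classical deformation proof behind that reference: assuming (1)--(3) fail on the set $\mathcal A_{n}$, the resulting gradient lower bound lets a truncated normalized pseudo-gradient flow push $G_{n}$ strictly below the level $e$ in time $T\sim\sqrt{\tau_{n}}$ while moving only $O(\sqrt{\tau_{n}})$, and since $K$ sits below $e-\tau_{n}$ it is fixed by the flow, so homotopy-stability of $\mathcal G$ yields the contradiction. The $\sqrt{\tau_{n}}$ balance you isolate is exactly the mechanism.

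Two small technical points worth tightening. First, global existence of the flow uses that $M$ is metrically complete; this is not in the stated hypotheses but holds in every application in the paper ($S_{c}$ and $S_{c}\times\mathbb R$). Second, your ``inner tube'' should be written out explicitly, e.g.\ $\{\operatorname{dist}(\cdot,G_{n})\le \tfrac{C}{2}\sqrt{\tau_{n}}\}\cap\{\mathcal I\ge e-\tfrac{\tau_{n}}{2}\}$, so that the claim ``$\chi\equiv1$ along the trajectory in Case~2'' is immediate; a locally Lipschitz cut-off between this set and $\mathcal A_{n}$ exists because the distance function on a Riemannian Hilbert manifold is Lipschitz. With these clarifications your proof is correct, and nothing further from the paper is needed since the authors do not argue the lemma themselves.
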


%(of course, $c_{0}$ depends on $p$ too; see Appendix $A$ for further details).
%Now we can state our basic assumptions on the potential. These are technical and can probably be improved. We state the explicit bounds on the norms in order to make clear that our result is not of perturbation type.

To study the behavior of a Palais-Smale sequence, we introduce a splitting Lemma. For $\lambda<0$, let
\begin{equation}\label{fla}
F_{\lambda}(u):=\frac{1}{2}\int_{\mathbb{R}^{N}}|(-\Delta)^{s/2}u|^{2}dx-\frac{\lambda}{2}\int_{\mathbb{R}^{N}}|u|^{2}dx-\frac{1}{p}\int_{\mathbb{R}^{N}}a(x)|u|^{p}dx
\end{equation}
and
\begin{equation}\label{finla}
F_{\infty,\lambda}(u):=\frac{1}{2}\int_{\mathbb{R}^{N}}|(-\Delta)^{s/2}u|^{2}dx-\frac{\lambda}{2}\int_{\mathbb{R}^{N}}|u|^{2}dx-\frac{1}{p}\int_{\mathbb{R}^{N}}|u|^{p}dx.
\end{equation}

\begin{lemma}\label{split}
(Splitting Lemma) Let $\{v_{n}\}\subset H^{s}(\mathbb{R}^{N})$ be a Palais-Smale sequence for  $F_{\lambda}$ such that $v_{n}\rightharpoonup v$ in $H^{s}(\mathbb{R}^{N})$. Then there exist an integer  $k\geq 0$, $k$ non-trivial solutions $u^{1}$, $u^{2}$, $\cdots$, $u^{k}\in H^{s}(\mathbb{R}^{N})$ to the limit equation
$$
(-\Delta)^{s}u=\lambda u+|u|^{p-2}u\quad\text{in $\mathbb{R}^{N}$}
$$
and $k$ sequences $\{y_{n}^{j}\} \subset \mathbb{R}^{N}$, $1 \leq j \leq k$, such that  $|y_{n}^{j}| \rightarrow \infty$ as $n \rightarrow \infty$, and
\begin{equation}\label{s1}
v_{n}=v+\sum_{j=1}^{k}u^{j}(\cdot-y_{n}^{j})+o(1) \quad \text {in $H^{s}(\mathbb{R}^{N})$}.
\end{equation}
Moreover, we have
\begin{equation}\label{s2}
\|v_{n}\|_{2}^{2}=\|v\|_{2}^{2}+\sum_{j=1}^{k}\|u^{j}\|_{2}^{2}+o(1)
\end{equation}
and
\begin{equation}\label{s3}
F_{\lambda}(v_{n})\rightarrow F_{\lambda}(v)+\sum_{j=1}^{k} F_{\infty,\lambda}(u^{j})\ \ \text{as }n\rightarrow \infty.
\end{equation}
\end{lemma}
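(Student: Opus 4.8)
The plan is to follow the iterated concentration-compactness (profile decomposition / global compactness) scheme, adapted to the fractional Sobolev setting and to the nonautonomous coefficient $a(x)$. First I would note that a Palais-Smale sequence for $F_\lambda$ is automatically bounded in $H^s(\mathbb{R}^N)$: using $F_\lambda(v_n)-\frac1p F_\lambda'(v_n)[v_n]=(\frac12-\frac1p)(\|(-\Delta)^{s/2}v_n\|_2^2-\lambda\|v_n\|_2^2)$ together with $\lambda<0$ and the norm $\|\cdot\|$ of $H^s$, the left-hand side being $O(1)+o(\|v_n\|)$ forces $\|v_n\|$ bounded, so the hypothesis $v_n\rightharpoonup v$ is not restrictive. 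Passing to the limit in $F_\lambda'(v_n)[\phi]=o(1)$ for $\phi\in C_0^\infty(\mathbb{R}^N)$, using the local compactness of $H^s\hookrightarrow L^q_{loc}$ (with $q<2_s^*$) and $(A_2)$, and then density, shows that $v$ solves $(-\Delta)^s u=\lambda u+a(x)|u|^{p-2}u$, i.e. $F_\lambda'(v)=0$.

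Next I set $v_n^1:=v_n-v\rightharpoonup 0$ and perform the Brezis-Lieb type splitting. Weak convergence in $\dot H^s$ gives $\|(-\Delta)^{s/2}v_n\|_2^2=\|(-\Delta)^{s/2}v\|_2^2+\|(-\Delta)^{s/2}v_n^1\|_2^2+o(1)$ and $\|v_n\|_2^2=\|v\|_2^2+\|v_n^1\|_2^2+o(1)$, while the Brezis-Lieb lemma combined with $(A_2)$ yields $\int_{\mathbb{R}^N}a(x)|v_n|^p=\int_{\mathbb{R}^N}a(x)|v|^p+\int_{\mathbb{R}^N}|v_n^1|^p+o(1)$; the replacement of $a(x)$ by $1$ in the last term comes from splitting $\int(a(x)-1)|v_n^1|^p$ over a large ball, where $|a-1|$ is small by $(A_2)$, and its complement, where $v_n^1\to 0$ in $L^p_{loc}$. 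A parallel computation on the differentials then shows $F_\lambda(v_n)=F_\lambda(v)+F_{\infty,\lambda}(v_n^1)+o(1)$ and $F_{\infty,\lambda}'(v_n^1)\to 0$ in $H^{-s}$, so $\{v_n^1\}$ is a Palais-Smale sequence for the translation-invariant functional $F_{\infty,\lambda}$ with $v_n^1\rightharpoonup 0$.

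Now I run the dichotomy. If $\|v_n^1\|\to 0$, the conclusion holds with $k=0$. Otherwise a Lions-type vanishing lemma provides $y_n^1\in\mathbb{R}^N$ and $\delta>0$ with $\int_{B_1(y_n^1)}|v_n^1|^2\ge\delta$; since $v_n^1\rightharpoonup 0$ the sequence $\{y_n^1\}$ cannot be bounded, hence $|y_n^1|\to\infty$ and $v_n^1(\cdot+y_n^1)\rightharpoonup u^1\ne 0$. Translation invariance makes $\{v_n^1(\cdot+y_n^1)\}$ a Palais-Smale sequence for $F_{\infty,\lambda}$, so $u^1$ solves $(-\Delta)^s u=\lambda u+|u|^{p-2}u$ and is non-trivial. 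Setting $v_n^2:=v_n^1-u^1(\cdot-y_n^1)\rightharpoonup 0$ and repeating the splitting identities with the translate shows $\{v_n^2\}$ is again Palais-Smale for $F_{\infty,\lambda}$ with $\|v_n^2\|_2^2=\|v_n^1\|_2^2-\|u^1\|_2^2+o(1)$ and $F_{\infty,\lambda}(v_n^2)=F_{\infty,\lambda}(v_n^1)-F_{\infty,\lambda}(u^1)+o(1)$; the mutual divergence $|y_n^i-y_n^j|\to\infty$ for $i\ne j$ is automatic since each re-centered remainder weakly vanishes. Iterating yields \eqref{s1}, \eqref{s2} and \eqref{s3}.

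Finally, the iteration terminates: testing the limit equation against $u^j$ gives $\|(-\Delta)^{s/2}u^j\|_2^2-\lambda\|u^j\|_2^2=\|u^j\|_p^p$, hence $F_{\infty,\lambda}(u^j)=(\frac12-\frac1p)\|u^j\|_p^p$; combining $\|u^j\|_p^p\ge\min\{1,-\lambda\}\|u^j\|^2$ with the Sobolev inequality $\|u^j\|_p^p\le C\|u^j\|^p$ and $p>2$ gives a uniform lower bound $F_{\infty,\lambda}(u^j)\ge\gamma>0$. Since $F_\lambda(v_n)$ is bounded and, by \eqref{s3}, the energies split additively with every $F_{\infty,\lambda}(u^j)\ge\gamma$, only finitely many profiles can occur, so $k$ is finite. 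I expect the main technical obstacle to be the bookkeeping in the nonautonomous Brezis-Lieb step — verifying that $\int(a(x)-1)|v_n^j|^p\to 0$ at each stage and that the re-centered remainders remain genuine Palais-Smale sequences for $F_{\infty,\lambda}$ — rather than any single estimate.
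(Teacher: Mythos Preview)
Your argument is correct and is precisely the standard global-compactness scheme that the paper invokes: the paper gives no independent proof but simply refers to \cite[Lemma 3.1]{JC}, whose argument (iterated extraction of escaping profiles via Lions' vanishing lemma, with Brezis--Lieb splitting of mass and energy and a uniform lower bound $F_{\infty,\lambda}(u^j)\ge\gamma>0$ to force termination) is exactly the one you outline. One cosmetic slip: in the nonautonomous Brezis--Lieb step you have the two regions interchanged --- $(A_2)$ makes $|a-1|$ small \emph{outside} the large ball, while $v_n^1\to 0$ in $L^p_{\mathrm{loc}}$ handles the bounded region --- but the mechanism is right and, as you anticipate, after the first subtraction the remainders are Palais--Smale for the autonomous $F_{\infty,\lambda}$, so this issue does not recur at later stages.
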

\begin{proof}
The proof of Lemma \ref{split} can be found in \cite[Lemma 3.1]{JC}. The only difference is that \cite{JC} deals with exterior domains, not with in the whole space, however the proof is exactly the same with $\lambda<0$ and thus we omit it here.
\end{proof}

\section{The minimax approach}\label{minimax}

For $h\in\mathbb{R}$ and $u\in H^{s}(\mathbb{R}^{N})$, we introduce the scaling
$$
h\star u(x):=e^{\frac{Nh}{2}}u(e^{h}x),
$$
which preserves the $L^{2}$-norm: $\|h \star u\|_{2}=\|u\|_{2}$ for all $h\in\mathbb{R}$. For $R>0$ and $h_{1}<0<   h_{2}$, which will be determined later, we set
$$
Q:=B_{R}(0)\times[h_{1}, h_{2}]\subset \mathbb{R}^{N} \times \mathbb{R}.
$$
 For $c>0$ we define
$$
\Gamma_{c}:=\left\{\gamma\in C(Q, S_{c}): \gamma(y,h)=h\star w_{c}(\cdot-y) \text { for all }(y,h)\in\partial Q\right\}.
$$
We want to find a solution of problem \eqref{pr1}-\eqref{pr2} whose energy is given by
\begin{equation}\label{mvc}
m_{a,c}:=\inf_{\gamma\in\Gamma_{c}}\max_{(y,h)\in Q}F(\gamma(y,h)).
\end{equation}
In order to develop a min-max argument, we need to prove that
$$
\sup_{\gamma\in\Gamma_{c}}\max_{(y,h)\in\partial Q}F(\gamma(y, h))<m_{a,c}
$$
at least for some suitable choice of $Q$. For this purpose, we will prove Lemma \ref{inf}, which gives a lower bound of $m_{a,c}$, and Lemma \ref{rh}, which gives an upper bound of $F\circ\gamma$ on the boundary $\partial Q$, for any given $\gamma \in \Gamma_{c}$. The values of $R>0$ and $h_{1}<0<h_{2}$ will be determined in Lemma \ref{rh}. %The results up to and including Lemma \ref{inf} hold for arbitrary $R>0$ and $h_{1}<0<h_{2}$.

Next, for $h_{1}<0<h_{2}$ determined in Lemma \ref{rh}, we define
$$
\begin{aligned}
&\mathcal{D}:=\{D\subset S_{c}:D\text{ is compact and connected, }h_{1}\star w_{c},\ \  h_{2}\star w_{c}\in D\},\\
&\mathcal{D}_{0}:=\{D\in\mathcal{D}:\beta(u)=0\text{ for all } u\in D\},\\
&\mathcal{D}_{r}:=\mathcal{D} \cap H_{\mathrm{rad}}^{s}(\mathbb{R}^{N}),
\end{aligned}
$$
and
$$
\begin{aligned}
l_{c}&:=\inf_{D\in\mathcal{D}}\max_{u \in D}F_{\infty}(u),\\
l_{c}^{0}&:=\inf_{D\in\mathcal{D}_{0}}\max_{u \in D}F_{\infty}(u),\\
l_{c}^{r}&:=\inf_{D\in\mathcal{D}_{r}}\max_{u \in D}F_{\infty}(u).
\end{aligned}
$$
By a similar argument as \cite{HJL}, we obtain
$$
m_{c}=\inf_{\sigma\in\Sigma_{c}}\max_{t\in(0,1)}F_{\infty}(\sigma(t)),
$$
where
$$
\Sigma_{c}=\{\sigma\in C([0,1],S_{c}):\sigma(0)=h_{1}\star w_{c},\ \ \sigma(1)=h_{2}\star w_{c}\}.
$$
\begin{lemma}\label{equi}
$l_{c}^{r}=l_{c}^{0}=l_{c}=m_{c}$.
\end{lemma}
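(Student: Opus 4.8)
The plan is to establish the chain of (in)equalities
$$
m_c \le l_c \le l_c^0 \le l_c^r \le m_c,
$$
which forces all four quantities to coincide. The two middle inequalities are essentially free from the inclusions $\mathcal{D}_r \subset \mathcal{D}_0 \subset \mathcal{D}$: any radial $D$ has $\beta(u)=0$ on it by property $(ii)$ of the barycentre, and any $D$ with vanishing barycentre is in particular an admissible competitor in $\mathcal{D}$; taking infima over progressively smaller classes can only increase the value, giving $l_c \le l_c^0 \le l_c^r$. So the real content is the two outer inequalities $m_c \le l_c$ and $l_c^r \le m_c$.

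For $l_c^r \le m_c$: I would exhibit a single radial competitor $D \in \mathcal{D}_r$ realizing (or approximating) the value $m_c$. The natural candidate is the image of a path $\sigma \in \Sigma_c$ along which $\max_t F_\infty(\sigma(t))$ is close to $m_c$; indeed the identity $m_c = \inf_{\sigma\in\Sigma_c}\max_t F_\infty(\sigma(t))$ quoted just above the lemma provides such paths. One must check the path can be chosen to stay in $H^s_{\mathrm{rad}}(\mathbb{R}^N)$ — this is where the concrete construction from \cite{HJL} enters: the min-max path joining $h_1 \star w_c$ to $h_2 \star w_c$ is built from dilations $h \star w_c$ of the radial ground state $w_c$, which remain radial. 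Its image $D = \sigma([0,1])$ is compact, connected, contains both endpoints, and lies in $H^s_{\mathrm{rad}}$, hence $D \in \mathcal{D}_r$ and $\max_{u\in D} F_\infty(u) = \max_t F_\infty(\sigma(t))$ is within $\varepsilon$ of $m_c$; letting $\varepsilon\to 0$ gives $l_c^r \le m_c$.

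For $m_c \le l_c$: given any $D \in \mathcal{D}$, I must produce from it an admissible path in $\Sigma_c$ whose max of $F_\infty$ does not exceed $\max_{u\in D}F_\infty(u)$, so that $m_c = \inf_{\Sigma_c}\max F_\infty \le \max_{u\in D}F_\infty$ and then take the infimum over $D$. Since $D$ is compact and connected, it is path-connected after a standard approximation (or one invokes connectedness directly via the mountain-pass characterization): one can join the two marked points $h_1\star w_c$ and $h_2\star w_c$ by a curve whose trace lies in $D$, giving $\sigma\in\Sigma_c$ with $\sigma([0,1])\subset D$; then $\max_t F_\infty(\sigma(t)) \le \max_{u\in D}F_\infty(u)$. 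A point needing care: $h_1 < 0 < h_2$ were chosen in Lemma \ref{rh} so that $F_\infty(h_i\star w_c) < m_c$, i.e. the endpoints lie below the mountain-pass level, which is exactly what makes the min-max value $m_c$ and ensures the endpoint conditions in $\Sigma_c$ and in $\mathcal{D}$ are compatible.

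The main obstacle is the passage from the connected set $D$ to an actual continuous path with trace in $D$ and with the prescribed endpoints, i.e. justifying that compact connected subsets of the Hilbert manifold $S_c$ may be replaced by paths without raising the energy level. This is handled by the usual device: cover $D$ by finitely many small balls on which $F_\infty$ varies by less than $\varepsilon$, use connectedness to chain the two endpoints through overlapping balls, and interpolate linearly (then project back onto $S_c$) within each ball, picking up only an $O(\varepsilon)$ error. Combined with $l_c^r \le m_c$ above and the trivial inclusions, this closes the loop and proves $l_c^r = l_c^0 = l_c = m_c$.
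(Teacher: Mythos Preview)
Your proof is correct and follows the same strategy as the paper: the inclusions $\mathcal{D}_r \subset \mathcal{D}_0 \subset \mathcal{D}$ give $l_c \le l_c^0 \le l_c^r$, the explicit dilation competitor $\tilde D=\{h\star w_c : h\in[h_1,h_2]\}\in\mathcal{D}_r$ yields $l_c^r \le m_c$, and the remaining inequality $m_c \le l_c$ is obtained by extracting a path in $\Sigma_c$ from an arbitrary connected $D\in\mathcal{D}$. For that last step the paper is slightly slicker than your finite-cover/interpolation/projection construction: it argues by contradiction, passes to a $\delta$-neighbourhood $U_\delta(D)$ of $D$ in $S_c$, and simply invokes the fact that an open connected subset of a locally path-connected space is path-connected, which produces the required $\sigma\in\Sigma_c$ with $\max_t F_\infty(\sigma(t))<m_c$ in one line.
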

\begin{proof}
Since $\mathcal{D}_{r}\subset\mathcal{D}_{0}\subset \mathcal{D}$, we have $l_{c}^{r}\geq l_{c}^{0}\geq l_{c}$. In order to prove that $l_{c}\geq m_{c}$ and $m_{c}\geq l_{c}^{r}$, arguing by contradiction we assume that $l_{c}<m_{c}$. Then $\max\limits_{u \in D}F_{\infty}(u)<m_{c}$ for some $D\in \mathcal{D}$, hence $\max\limits_{u\in U_{\delta}(D)}F_{\infty}(u)<m_{c}$ for some $\delta>0$, where $U_{\delta}(D)$ is a $\delta$-neighborhood of $D$. Observe that $U_{\delta}(D)$ is open and connected, so it is path-connected. Therefore there exists a path $\sigma\in\Sigma_{c}$ such that $\max\limits_{t \in(0,1)}F_{\infty}(\sigma(t))<m_{c}$, which is a contradiction.

On the other hand, let $\tilde{D}:=\{h\star w_{c}:h \in[h_{1}, h_{2}]\}\in\mathcal{D}_{r}$, then
$$
\max_{u\in\tilde{D}}F_{\infty}(u)=\max_{h\in[h_{1},h_{2}]}F_{\infty}(h\star w_{c})=F_{\infty}(w_{c})=m_{c}.
$$
Hence, $m_{c}\geq l_{c}^{r}$.
\end{proof}

\begin{lemma}\label{lm}
$L_{c}:=\inf\limits_{D\in\mathcal{D}_{\mathrm{0}}}\max\limits_{u\in D}F(u)>m_{c}$.
\end{lemma}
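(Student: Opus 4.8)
The plan is to prove the non‑strict bound $L_c\ge m_c$ by a soft comparison, and then to exclude equality by a compactness argument built on the minimax principle of Lemma~\ref{mini} and the splitting Lemma. For the first part I would note that $(A_3)$ forces $a$ to be non‑constant and, combined with $(A_4)$, gives $0<a(x)\le 1$ on $\mathbb{R}^N$ with $a\not\equiv 1$; hence $F(u)\ge F_\infty(u)$ for every $u$, so $\max_{u\in D}F\ge \max_{u\in D}F_\infty$ for each $D\in\mathcal{D}_0$, and taking the infimum yields $L_c\ge l_c^0=m_c$ by Lemma~\ref{equi}. Now suppose for contradiction that $L_c=m_c$, and fix $D_n\in\mathcal{D}_0$ with $\max_{D_n}F\to m_c$. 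Since $F\ge F_\infty$ and $\max_{D_n}F_\infty\ge l_c^0=m_c$ (Lemma~\ref{equi}), also $\max_{D_n}F_\infty\to m_c$. I would then apply Lemma~\ref{mini} on $M=S_c$ with $\mathcal{I}=F_\infty$, $K=\{h_1\star w_c,\ h_2\star w_c\}$, and $\mathcal{G}=\mathcal{D}$ — which is homotopy‑stable, since compactness, connectedness and containment of $K$ all persist under deformations fixing $K$ — noting that $e=\inf_{\mathcal{D}}\max F_\infty=l_c=m_c$ and $\max_K F_\infty<m_c$ because $h_1\ne 0\ne h_2$. Feeding in $G_n=D_n$ with $\tau_n:=\max_{D_n}F_\infty-m_c\to 0^+$, this produces $u_n\in S_c$ with $F_\infty(u_n)\to m_c$, $\|\nabla_{S_c}F_\infty(u_n)\|\to 0$ and $\operatorname{dist}(u_n,D_n)\to 0$. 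Running the minimax over $\mathcal{D}$ rather than $\mathcal{D}_0$ is deliberate: the barycentre condition need not survive a deformation, and it re‑enters the argument only through $\operatorname{dist}(u_n,D_n)\to 0$.

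Next I would analyze $\{u_n\}$. Testing $\nabla_{S_c}F_\infty(u_n)$ against the $L^2$‑preserving scaling direction $\partial_h(h\star u_n)|_{h=0}$ gives $s\|(-\Delta)^{s/2}u_n\|_2^2-\tfrac{N(p-2)}{2p}\|u_n\|_p^p\to 0$, and together with $F_\infty(u_n)\to m_c$ this pins $\|(-\Delta)^{s/2}u_n\|_2^2\to\tfrac{2N(p-2)}{N(p-2)-4s}m_c$ (using $N(p-2)>4s$), so $\{u_n\}$ is bounded in $H^s(\mathbb{R}^N)$. The Lagrange multipliers $\mu_n:=c^{-2}\big(\|(-\Delta)^{s/2}u_n\|_2^2-\|u_n\|_p^p\big)$ then converge, along a subsequence, to $\mu$ with $\mu c^2=\tfrac{2m_c(p(N-2s)-2N)}{N(p-2)-4s}<0$ since $p<2_s^*$; hence $\{u_n\}$ is a bounded Palais--Smale sequence for $F_{\infty,\mu}$. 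Applying the splitting Lemma to $F_{\infty,\mu}$ (the $a\equiv 1$ case of Lemma~\ref{split}, valid since $\mu<0$) gives $u_n\rightharpoonup u$ and $u_n=u+\sum_{j=1}^k U^j(\cdot-y_n^j)+o(1)$ in $H^s(\mathbb{R}^N)$ with $|y_n^j|\to\infty$, the $U^j$ nontrivial solutions of $(-\Delta)^s U=\mu U+|U|^{p-2}U$, together with $F_\infty(u)+\sum_{j=1}^k F_\infty(U^j)=m_c$ and $\|u\|_2^2+\sum_{j=1}^k\|U^j\|_2^2=c^2$. The crucial step is to exclude $k\ge 1$: every nontrivial solution $V$ of the limit equation obeys $F_\infty(V)\ge m_{\|V\|_2}$, which follows from the Poho\v{z}aev identity for $V$ (so $F_\infty(V)=\tfrac{N(p-2)-4s}{2N(p-2)}\|(-\Delta)^{s/2}V\|_2^2$) combined with the sharp Gagliardo--Nirenberg inequality having $w$ as optimizer; combined with $m_{c'}=(c'/c_0)^{-\theta}m_{c_0}$, $\theta>0$, the two identities above would force $1\ge\sum_{j=1}^k(\|U^j\|_2^2/c^2)^{-\theta/2}$ (plus a nonnegative term from $u$), which is impossible since each $\|U^j\|_2^2/c^2\in(0,1)$ makes the corresponding factor $>1$. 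Hence $k=0$, $u_n\to u$ strongly in $H^s(\mathbb{R}^N)$, and $(u,\mu)$ solves $(-\Delta)^s u=\mu u+|u|^{p-2}u$ with $\|u\|_2=c$ and $F_\infty(u)=m_c$; the equality case in Gagliardo--Nirenberg, the uniqueness statement for \eqref{lim1} and the scaling in \eqref{lim2} then give $u=\pm w_c(\cdot-y_0)$ for some $y_0\in\mathbb{R}^N$.

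To close the argument, since $u_n\to u$ strongly and $\operatorname{dist}(u_n,D_n)\to 0$, I would pick $v_n\in D_n$ with $v_n\to u$ in $H^s(\mathbb{R}^N)$; by continuity of $\beta$ and its properties $(ii)$--$(iv)$, $\beta(v_n)\to\beta\big(\pm w_c(\cdot-y_0)\big)=y_0$, while $v_n\in D_n\subset\mathcal{D}_0$ gives $\beta(v_n)=0$, forcing $y_0=0$ and $u=\pm w_c$. Then on one hand $F(v_n)\le\max_{D_n}F\to m_c$, and on the other hand $F(v_n)\to F(\pm w_c)=F(w_c)=m_c+\tfrac1p\int_{\mathbb{R}^N}(1-a(x))|w_c|^p\,dx$, and this last integral is strictly positive because $1-a\ge 0$ is positive on a set of positive measure (from $(A_3)$) and $w_c>0$ on $\mathbb{R}^N$. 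This contradiction proves $L_c>m_c$.

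I expect the compactness analysis to be the main obstacle: boundedness via the scaling/Poho\v{z}aev identity, fixing the sign of the Lagrange multiplier, and above all the exclusion of escaping bubbles, which rests entirely on the strict super‑additivity of $c'\mapsto m_{c'}$ encoded in $m_{c'}=(c'/c_0)^{-\theta}m_{c_0}$ with $\theta>0$ (splitting the mass strictly raises the total limiting energy). A further delicate point is that the surviving profile must be a signed ground state, so that its barycentre vanishes; this is exactly where the constraint $\beta\equiv 0$ does its work and is what separates $L_c$ from the autonomous level $m_c$.
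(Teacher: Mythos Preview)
Your overall strategy mirrors the paper's, but there is a genuine gap in how you extract the Poho\v{z}aev information. You propose to test $\nabla_{S_c}F_\infty(u_n)$ against the scaling direction $\psi_n:=\partial_h(h\star u_n)\big|_{h=0}=\tfrac{N}{2}u_n+x\cdot\nabla u_n$. For a generic $u_n\in H^s(\mathbb{R}^N)$ with $0<s<1$ there is no reason for $x\cdot\nabla u_n$ to lie in $H^s(\mathbb{R}^N)$ at all; and even when it does, the bound $|\langle\nabla_{S_c}F_\infty(u_n),\psi_n\rangle|\le \|\nabla_{S_c}F_\infty(u_n)\|\,\|\psi_n\|_{H^s}$ is useless unless $\|\psi_n\|_{H^s}$ stays bounded --- yet you have no control on $\|(-\Delta)^{s/2}u_n\|_2$ at this stage, since that is precisely the quantity you are trying to bound. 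This is exactly why the paper does \emph{not} run Lemma~\ref{mini} on $S_c$: following Jeanjean, it passes to the augmented functional $\tilde F_\infty(u,h):=F_\infty(h\star u)$ on $M=S_c\times\mathbb{R}$, takes $G_n=D_n\times\{0\}$, and then $\partial_h\tilde F_\infty(u_n,h_n)\to 0$ is delivered directly by the gradient estimate of Lemma~\ref{mini}. Setting $v_n:=h_n\star u_n$ (with $h_n\to 0$ from the distance condition) gives the Poho\v{z}aev relation without ever pairing against $x\cdot\nabla u_n$; your boundedness and multiplier computations are then valid verbatim for $v_n$.

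There is a second, smaller gap in the bubble exclusion. The inequality $1\ge\sum_j(\|U^j\|_2^2/c^2)^{-\theta/2}$ only forces a contradiction when every ratio lies strictly in $(0,1)$; in the case $k=1$, $u=0$ one has $\|U^1\|_2=c$ and nothing is excluded by this step. The paper handles this case separately via the barycentre: from $v_n=U^1(\cdot-y_n^1)+o(1)$ and $\operatorname{dist}(v_n,D_n)\to 0$ one picks $\varphi_n\in D_n$ with $\beta(\varphi_n)=0$ and $\varphi_n(\cdot+y_n^1)\to U^1$, whence $y_n^1$ converges to a fixed point of $\mathbb{R}^N$, contradicting $|y_n^1|\to\infty$. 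Only after disposing of this case can you conclude $k=0$ and proceed to identify the strong limit as $\pm w_c$.
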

\begin{proof}
By $(A_{3})$, $(A_{4})$ and Lemma \ref{equi}, we know
$$
\max_{u\in D}F(u)\geq \max_{u \in D}F_{\infty}(u)\geq l_{c}^{0}=m_{c}\quad\text {for all $D\in\mathcal{D}_{0}$}.
$$
Now we argue by contradiction and assume that there exists a sequence $\{D_n\}\subset\mathcal{D}_{0}$ such that
$$
\max_{u \in D_{n}}F(u)\rightarrow m_{c}.
$$
Then we have
\begin{equation}\label{l1}
\max_{u\in D_{n}}F_{\infty}(u)\rightarrow m_{c}.
\end{equation}
Borrowing the idea of \cite[Lemma 2.4]{LJ}, we consider the functional
$$
\tilde{F}_{\infty}: H^{s}(\mathbb{R}^{N}) \times \mathbb{R} \rightarrow \mathbb{R}, \quad \tilde{F}_{\infty}(u, h):=F_{\infty}(h \star u)
$$
constrained to $M:=S_{c} \times \mathbb{R}$. We apply Lemma \ref{mini} with
$$
K:=\{(h_{1} \star w_{c}, 0),(h_{2} \star w_{c}, 0)\}
$$
and
$$
\mathcal{G}:=\{G \subset M: G \text { is compact, connected, } K\subset G\}.
$$
Observe that
$$
\tilde{l}_{c}:=\inf_{G \in \mathcal{G}} \max_{(u, h) \in G} \tilde{F}_{\infty}(u, h)=l_{c}=m_{c}.
$$
In fact, since $\mathcal{D} \times\{0\} \subset \mathcal{G}$, hence $l_{c} \geq \tilde{l}_{c}$, and for any $G \in \mathcal{G}$, we have $D:=\{h\star u:(u,h)\in G\} \in \mathcal{D}$ and
$$
\max_{(u,h)\in G}\tilde{F}_{\infty}(u,h)=\max_{(u, h)\in G}F_{\infty}(h\star u)=\max_{\bar{u}\in D} F_{\infty}(\bar{u}),
$$
hence $l_{c} \leq \tilde{l}_{c}$. Therefore, Lemma \ref{mini} yields a sequence $(u_{n}, h_{n}) \in S_{c} \times \mathbb{R}$ such that

$(1)$  $|\tilde{F}_{\infty}(u_{n}, h_{n})-m_{c}|\rightarrow 0$ as $n\rightarrow \infty$,

$(2)$  $\|\nabla_{S_{c}\times\mathbb{R}}\tilde{F}_{\infty}(u_{n}, h_{n})\| \rightarrow 0$ as $n \rightarrow \infty $,

$(3)$  $\operatorname{dist}((u_{n}, h_{n}), D_{n} \times\{0\}) \rightarrow 0$ as $n \rightarrow \infty$.

In particular, let $v_{n}:=h_{n}\star u_{n}$, differentiation shows that$\{v_{n}\}\subset S_{c}$ is a Palais-Smale sequence for $F_{\infty}$ on $S_{c}$ at level $m_{c}$ satisfying the Poho\v{z}aev identity for $F_{\infty}$, that is there exist Lagrange multipliers $\mu_{n}\in\mathbb{R}$ such that
$$
\begin{aligned}
&\frac{1}{2}\|(-\Delta)^{s/2}v_{n}\|_{2}^{2}-\frac{1}{p}\|v_{n}\|_{p}^{p}\rightarrow m_{c}, \\
&\|F_{\infty}^{\prime}(v_{n})-\mu_{n}H^{\prime}(v_{n})\|_{(H^{s}(\mathbb{R}^{N}))^*}\rightarrow 0,\quad\text{ where }H(u)=\frac{1}{2}\int_{\mathbb{R}^{N}}|u|^{2}dx,\\
&s\|(-\Delta)^{s/2}v_{n}\|_{2}^{2}-\frac{N(p-2)}{2p}\|v_{n}\|_{p}^{p}\rightarrow 0,\\
\end{aligned}
$$
as $n\rightarrow\infty$. Moreover, $\{v_{n}\}$ is bounded in $H^{s}(\mathbb{R}^{N})$. In fact, from the first and third relations above, we can infer that
$$
\|(-\Delta)^{s/2}v_{n}\|_{2}^{2}\rightarrow\frac{2N(p-2)}{N(p-2)-4s}m_{c}
$$
and
$$
\|v_{n}\|_{p}^{p}\rightarrow\frac{4ps}{N(p-2)-4s}m_{c}.
$$
Thus $\{v_{n}\}$ is bounded in $H^{s}(\mathbb{R}^{N})$, and the second relation implies that
$$
\|(-\Delta)^{s/2}v_{n}\|_{2}^{2}-\mu_{n}\|v_{n}\|_{2}^{2}-\|v_{n}\|_{p}^{p}\rightarrow 0,
$$
which is equivalent to
$$
\mu_{n}=\frac{\|(-\Delta)^{s/2}v_{n}\|_{2}^{2}-\|v_{n}\|_{p}^{p}}{\|v_{n}\|_{2}^{2}}+o(1)
\rightarrow\frac{N(p-2)-2ps}{N(p-2)-4s}\cdot\frac{2m_{c}}{c^{2}}=:\mu<0.
$$
Therefore, after passing to subsequences, $\{v_{n}\}$ converges weakly in $H^{s}(\mathbb{R}^{N})$ to a solution $v \in H^{s}(\mathbb{R}^{N})$ of $(-\Delta)^{s}v=\mu v+|v|^{p-2}v$. We claim that
\begin{equation}\label{l2}
v_{n} \rightarrow \pm w_{c}\quad\text{strongly in $H^{s}(\mathbb{R}^{N})$}.
\end{equation}
In order to see this we first observe that, using again the second relation above,
$$
\int_{\mathbb{R}^{N}}(-\Delta)^{s/2}v_{n}(-\Delta)^{s/2}\varphi dx-\mu_{n}\int_{\mathbb{R}^{N}}v_{n}\varphi dx-\int_{\mathbb{R}^{N}}|v_{n}|^{p-2}v_{n}\varphi dx=o(1)\|\varphi\|
$$
for every $\varphi \in H^{s}(\mathbb{R}^{N})$, hence $\{v_{n}\}$ is also a Palais-Smale sequence for $F_{\infty, \mu}$ at the level $m_{c}-\frac{\mu}{2}c^{2}$. As a consequence, Lemma \ref{split} implies
$$
v_{n}=v+\sum_{j=1}^{m} u^{j}(\cdot-y_{n}^{j})+o(1)
$$
in $H^{s}(\mathbb{R}^{N})$, where $m\geq 0$ and $u^{j}\neq 0$ are solutions to
$$
(-\Delta)^{s}u^{j}=\mu u^{j}+|u^{j}|^{p-2}u^{j}
$$
and $|y_{n}^{j}| \rightarrow \infty$. Moreover, setting $\gamma:=\|v\|_{2}$ and  $\alpha_{j}:=\|u^{j}\|_{2}$, then there holds
$$
c^{2}=\gamma^{2}+\sum_{j=1}^{m} \alpha_{j}^{2}
$$
and thus at least one of the limit functions must be non-trivial. In addition, we have
$$
F_{\infty}(v_{n})-\frac{\mu}{2}c^{2}=F_{\infty}(v)-\frac{\mu}{2} \gamma^{2}+\sum_{j=1}^{m}(F_{\infty}(u^{j})-\frac{\mu}{2}\alpha_{j}^{2})+o(1),
$$
which yields that
$$
F_{\infty}(v_{n})=F_{\infty}(v)+\sum_{j=1}^{m} F_{\infty}(u^{j})+o(1) .
$$
Using Appendix in \cite{SBP} and \eqref{mm}, we have that, if $v$ is non-trivial, then $F_{\infty}(v) \geq m_{\gamma}\geq m_{c}$. Therefore, if $m=0$, then $v\neq0$; if $m=1$, since $u^{j}\neq0$, we have $F_{\infty}(u^{j})\geq m_{\alpha_{j}}\geq m_{c}$, then
$$
m_{c}+o(1)=F_{\infty}(v_{n})=F_{\infty}(v)+\sum_{j=1}^{m}F_{\infty}(u^{j})+o(1)\geq 2m_{c}+o(1),
$$
which implies that $v=0$.
%$u^{j}$ is non-trivial then $F_{\infty}(u^{j})\geq m_{\alpha_{j}}\geq m_{c}$. So, if two or more limit functions are non-trivial we get the contradiction
%$$
%m_{c}+o(1)=F_{\infty}(v_{n})=F_{\infty}(v)+\sum_{j=1}^{m}F_{\infty}(u^{j})+o(1)\geq 2m_{c}+o(1),
%$$
%therefore either $m=1$ and $v \equiv 0$, or $m=0$ and $v \not \equiv 0$.

If $m=1$ and $v=0$, then $v_{n}=u^{1}(x-y_{n}^{1})+o(1)$, that is, $\tilde{v}_{n}:=v_{n}(\cdot+y_{n}^{1})=u^{1}+o(1)$. On the other hand, due to point $(3)$, we have that $v_{n}=\varphi_{n}+o(1)$ with $\varphi_{n} \in D_{n}$, so that  $\beta(\varphi_{n})=0$. Moreover, by the continuity of $\beta$ in $H^{s}(\mathbb{R}^{N})$, $u^{1}=\tilde{v}_{n}+o(1)$ and $\tilde v_{n}=\varphi_{n}(\cdot+y_{n}^{1})+o(1)$, we obtain
$$
\beta(u^{1})=\beta(\tilde{v}_{n})+o(1)=\beta(\varphi_{n}(\cdot+y_{n}^{1}))+o(1)=y_{n}^{1}+o(1),
$$
which contradicts $|y_{n}^{1}|\rightarrow \infty$.
So far we deduced that $m=0$ and $v_{n} \rightarrow v$ strongly in $H^{s}(\mathbb{R}^{N})$. Using again point $(3)$, %there exist a sequence $\{\psi_{n}\}\subset D_{n}$ such that $\beta(\psi_{n})=0$ and $v_{n}=\psi_{n}+o(1)$, then
we have $\beta(v)=0$. Hence, by uniqueness, \eqref{l2} follows. This implies
$$
F(v_{n})=F_{\infty}(v_{n})+\frac{1}{p} \int_{\mathbb{R}^{N}}(1-a(x))|v_{n}|^{p}dx\rightarrow m_{c}+\frac{1}{p} \int_{\mathbb{R}^{N}}(1-a(x))|w_{c}|^{p}dx>m_{c}
$$
as $n\rightarrow \infty$, which contradicts \eqref{l1}.
\end{proof}
\begin{lemma}\label{inf}
For any $c>0$ there holds $m_{a,c}\geq L_{c}$.
\end{lemma}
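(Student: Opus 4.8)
\noindent\emph{Plan of proof.} The plan is to show that for every $\gamma\in\Gamma_c$ one can produce a set $D\in\mathcal{D}_0$ with $\max_{u\in D}F(u)\le\max_{(y,h)\in Q}F(\gamma(y,h))$; taking the infimum over $\gamma\in\Gamma_c$ then yields $L_c\le m_{a,c}$. This is precisely the linking inequality between the behaviour of $F$ along paths in $\Gamma_c$ and the minimax class $\mathcal{D}_0$.

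First I would record the barycentre along $\partial Q$. Since $w_c$ is radial, so is $h\star w_c$, hence $\beta(h\star w_c)=0$ by property $(ii)$; and since $h\star w_c(\cdot-y)$ is a translate of $h\star w_c$ (by $e^{-h}y$), property $(iv)$ gives $\beta(\gamma(y,h))=e^{-h}y$ for all $(y,h)\in\partial Q$. Observe that $(0,h_1)$ and $(0,h_2)$ lie on the bottom and top faces $\overline{B_R(0)}\times\{h_i\}$ of $Q$, with $\gamma(0,h_i)=h_i\star w_c$; on these two faces $\beta\circ\gamma$ vanishes only at these two points, and on the lateral boundary $\partial B_R(0)\times[h_1,h_2]$ it never vanishes, because there $|\beta(\gamma(y,h))|=e^{-h}R>0$.

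The core of the argument is to produce a compact connected set $C\subset Q$ containing $(0,h_1)$ and $(0,h_2)$ on which $\beta\circ\gamma\equiv 0$. Consider the continuous map $f:Q\to\mathbb{R}^N$, $f(y,h):=\beta(\gamma(y,h))$, which is continuous because $\gamma$ maps into $S_c$ (so $\gamma(y,h)\ne 0$) and $\beta$ is continuous on $H^s(\mathbb{R}^N)\setminus\{0\}$. For each $h\in[h_1,h_2]$ we have $f(\cdot,h)\ne 0$ on $\partial B_R(0)$, and since the Brouwer degree depends only on boundary values, $\deg(f(\cdot,h),B_R(0),0)$ equals the degree of the linear extension $y\mapsto e^{-h}y$, which has positive determinant and hence degree $+1$. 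Set $Z:=f^{-1}(0)$, compact in $Q$. If $Z$ contained no connected subset joining $(0,h_1)$ to $(0,h_2)$, then by the standard separation property of compact metric spaces (Whyburn's lemma) one could write $Z=Z_1\sqcup Z_2$ with $Z_1,Z_2$ disjoint and compact, $(0,h_i)\in Z_i$; enclosing $Z_1,Z_2$ in open sets $U_1,U_2$ with disjoint closures, both disjoint from the lateral boundary, additivity together with the generalized (parametrized) homotopy invariance of the degree would give $\deg(f(\cdot,h),U_1^{h},0)+\deg(f(\cdot,h),U_2^{h},0)=1$ with each term independent of $h\in[h_1,h_2]$; but at $h=h_1$ the unique zero of $f(\cdot,h_1)$ lies in $U_1$, forcing the second term to be $0$ and hence the first to be $1$, whereas at $h=h_2$ the unique zero lies in $U_2$, forcing the first term to be $0$ — a contradiction. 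Therefore the connected component $C$ of $Z$ containing $(0,h_1)$ also contains $(0,h_2)$; being a component of the compact set $Z$, it is closed, hence compact.

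Finally set $D:=\gamma(C)$. Then $D\subset S_c$ is compact and connected (continuous image of a compact connected set), it contains $\gamma(0,h_1)=h_1\star w_c$ and $\gamma(0,h_2)=h_2\star w_c$, and $\beta(u)=0$ for all $u\in D$ since $\beta\circ\gamma\equiv 0$ on $C$; thus $D\in\mathcal{D}_0$ and $\max_{u\in D}F(u)=\max_{(y,h)\in C}F(\gamma(y,h))\le\max_{(y,h)\in Q}F(\gamma(y,h))$, which is what was needed. The main obstacle is the connectedness claim: the degree computation by itself only produces a zero of $\beta\circ\gamma$ at each height $h$, and upgrading this to a single connected zero set running from the bottom face to the top face of $Q$ is exactly where the Whyburn-type separation argument combined with the generalized homotopy invariance of the Brouwer degree enters, the technical point being to keep the auxiliary open sets away from the lateral boundary so that every degree appearing is well defined.
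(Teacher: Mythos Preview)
Your proof is correct and follows exactly the paper's approach: compute the barycentre of $\gamma$ on $\partial Q$, use that $\deg(\beta\circ\gamma(\cdot,h),B_R(0),0)=1$ for every $h$, extract via degree theory a compact connected zero set $C\subset Q$ joining $(0,h_1)$ to $(0,h_2)$, and set $D=\gamma(C)\in\mathcal D_0$; the paper simply asserts the existence of such a $C$ ``by degree theory'' whereas you spell out the Whyburn separation/continuation argument. One small slip: in the paper's convention $h\star w_c(\cdot-y)$ denotes $(h\star w_c)(\cdot-y)$, a translate of $h\star w_c$ by $y$ (not $e^{-h}y$), so $\beta(\gamma(y,h))=y$ on $\partial Q$ --- this is immaterial since either value is nonzero on the lateral boundary and vanishes only at $y=0$ on the top and bottom faces.
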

\begin{proof}
Given a function $\gamma: Q=B_{R}(0)\times[h_{1},h_{2}]\rightarrow S_{c}$ and  $h\in[h_{1}, h_{2}]$ we consider the mapping
$$
f_{h}:B_{R}(0)\rightarrow \mathbb{R}^{N},\quad y\mapsto\beta\circ\gamma(y, h) .
$$
We note that $f_{h_{i}}(y)=0$ if and only if $y=0$ for $i=1,2$, and $f_{h}(y)=y \neq 0$ for any $y \in   \partial B_{R}(0)$, so that $\operatorname{deg}(f_{h},B_{R}(0),0)=1$ for all $h \in[h_{1}, h_{2}]$. Therefore, by degree theory, there exists a connected compact set $Q_{0} \subset Q$ such that $(0, h_{i})\in Q_{0}$ for $i=1 , 2$, and $\beta \circ \gamma(y, h)=0$ for any $(y, h) \in Q_{0}$. Hence the set $D_{0}:=\gamma(Q_{0})\in\mathcal{D}_{0}$ satisfies
$$
\max _{(y, h) \in Q} F(\gamma(y, h)) \geq \max _{u \in D_{0}}F(u),
$$
which concludes the proof.
\end{proof}
\begin{lemma}\label{rh}
For any $c>0$ and $\varepsilon>0$ there exist $\bar{R}>0$ and  $\bar{h}_{1}<0<\bar{h}_{2}$ such that for $Q=B_{R}(0)\times[h_{1}, h_{2}]$ with $R \geq \bar{R}$, $h_{1} \leq \bar{h}_{1}$, $h_{2} \geq \bar{h}_{2}$ the following holds:
\begin{equation}\label{r1}
\max_{(y,h)\in\partial Q}F(h\star w_{c}(\cdot-y))<m_{c}+\varepsilon.
\end{equation}
\end{lemma}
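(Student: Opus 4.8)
The plan is to compute $F(h\star w_c(\cdot-y))$ in closed form and then estimate it on each of the three faces of $\partial Q=\big(\partial B_R(0)\times[h_1,h_2]\big)\cup\big(B_R(0)\times\{h_1\}\big)\cup\big(B_R(0)\times\{h_2\}\big)$, treating the range of $h$ in two regimes. Performing the change of variables $z=e^{h}x-y$ and using that $(-\Delta)^{s/2}$ is translation invariant and scales by $e^{2sh}$ under $h\star\,\cdot\,$, one finds
\[
F(h\star w_c(\cdot-y))=\frac{e^{2sh}}{2}\|(-\Delta)^{s/2}w_c\|_2^2-\frac{e^{\frac{N(p-2)}{2}h}}{p}\int_{\mathbb{R}^N}a\big(e^{-h}(z+y)\big)\,|w_c(z)|^p\,dz,
\]
where $\tfrac{N(p-2)}{2}>2s$ since $p>2+\tfrac{4s}{N}$. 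Moreover $(A_3)$ and $(A_4)$ force $a_*\le a(x)\le 1$ on $\mathbb{R}^N$, so the right-hand side is bounded above, uniformly in $y$, by
\[
g(h):=\frac{e^{2sh}}{2}\|(-\Delta)^{s/2}w_c\|_2^2-\frac{e^{\frac{N(p-2)}{2}h}}{p}\,a_*\,\|w_c\|_p^p .
\]

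Since $\tfrac{N(p-2)}{2}>2s$, the function $g$ tends to $0$ as $h\to-\infty$ and to $-\infty$ as $h\to+\infty$; recalling $m_c=F_\infty(w_c)>0$, I would first fix $\bar h_1<0$ with $\tfrac12 e^{2s\bar h_1}\|(-\Delta)^{s/2}w_c\|_2^2<m_c$ and $\bar h_2>0$ with $g(h)<0$ for all $h\ge\bar h_2$. These choices already dispose of two of the three faces: on $h=h_1\le\bar h_1$ the term carrying $|w_c|^p$ is nonpositive, so $F(h_1\star w_c(\cdot-y))\le\tfrac12 e^{2sh_1}\|(-\Delta)^{s/2}w_c\|_2^2<m_c<m_c+\varepsilon$ for every $y$; and on $h=h_2\ge\bar h_2$ — as well as on the portion of the lateral face $\partial B_R(0)\times[h_1,h_2]$ with $h\ge\bar h_2$ — we get $F(h\star w_c(\cdot-y))\le g(h)<0<m_c+\varepsilon$, again for every $y$ and with bounds that do not deteriorate as $h_2\to+\infty$.

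It then remains to control the lateral face in the range $h\in[h_1,\bar h_2]$, where $h$ is now bounded above by the \emph{fixed} number $\bar h_2$. Here I would split off the limit functional: write
\[
F(h\star w_c(\cdot-y))=F_\infty(h\star w_c)+\frac{e^{\frac{N(p-2)}{2}h}}{p}\int_{\mathbb{R}^N}\big(1-a(e^{-h}(z+y))\big)\,|w_c(z)|^p\,dz,
\]
and use $\max_{h\in\mathbb{R}}F_\infty(h\star w_c)=m_c$ (a consequence of the Pohozaev identity for $w_c$, already exploited in Lemma \ref{equi}), together with $0\le 1-a\le 1-a_*$ and $e^{\frac{N(p-2)}{2}h}\le e^{\frac{N(p-2)}{2}\bar h_2}$, to obtain
\[
F(h\star w_c(\cdot-y))\le m_c+\frac{e^{\frac{N(p-2)}{2}\bar h_2}}{p}\int_{\mathbb{R}^N}\big(1-a(e^{-h}(z+y))\big)\,|w_c(z)|^p\,dz .
\]
The decisive estimate is the claim $\sup_{h\le\bar h_2}\int_{\mathbb{R}^N}(1-a(e^{-h}(z+y)))|w_c(z)|^p\,dz\to 0$ as $|y|\to\infty$: given $\eta>0$, by $(A_2)$ pick $\rho>0$ with $1-a(x)<\eta$ for $|x|>\rho$, and pick $M>0$ with $\int_{|z|>M}|w_c|^p<\eta$; for $|z|\le M$ and $h\le\bar h_2$ one has $|e^{-h}(z+y)|\ge e^{-\bar h_2}(|y|-M)$, so for $|y|\ge M+e^{\bar h_2}\rho$ the integrand is $<\eta|w_c|^p$ on $\{|z|\le M\}$ and $\le(1-a_*)|w_c|^p$ on $\{|z|>M\}$, yielding a bound $<\eta(\|w_c\|_p^p+1)$; since $\eta$ is arbitrary the claim follows. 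Choosing $\bar R$ so large that the last displayed error term is $<\varepsilon$ for all $|y|\ge\bar R$ and all $h\le\bar h_2$ then closes the estimate on the lateral face, hence on all of $\partial Q$, for any $R\ge\bar R$, $h_1\le\bar h_1$, $h_2\ge\bar h_2$.

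The one genuine obstacle is that a single $\bar R$ must work for \emph{all} $h_2\ge\bar h_2$, whereas the natural error term $\tfrac1p e^{\frac{N(p-2)}{2}h}\int(1-a)|w_c|^p$ contains the factor $e^{\frac{N(p-2)}{2}h}$, which is unbounded as $h\to+\infty$. Splitting the $h$-range at the \emph{fixed} level $\bar h_2$ is exactly what resolves it: for $h\ge\bar h_2$ the crude pointwise bound $g(h)<0$ needs no decay in $|y|$ at all, while for $h\le\bar h_2$ the scaling factor is uniformly bounded and the $|y|\to\infty$ vanishing of $\int(1-a(e^{-h}(\cdot+y)))|w_c|^p$ — uniform over $h\le\bar h_2$ by the argument above — takes over.
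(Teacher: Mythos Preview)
Your proof is correct and follows the same overall strategy as the paper: control the top and bottom faces via the asymptotics of $h\mapsto F(h\star w_c(\cdot-y))$ as $h\to\pm\infty$ (using $a_*\le a\le 1$), and control the lateral face by showing that the defect $\int(1-a)\,|h\star w_c(\cdot-y)|^p\,dx$ vanishes as $|y|\to\infty$ thanks to $(A_2)$ and the integrability of $|w_c|^p$.

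Two small differences are worth recording. First, your change of variables $z=e^hx-y$ corresponds to reading $h\star w_c(\cdot-y)$ as $h\star\big(w_c(\cdot-y)\big)$, whereas the paper's displayed formula $a(e^{-h}x+y)$ comes from the reading $(h\star w_c)(\cdot-y)$; the two parametrizations differ only by replacing $y$ with $e^h y$, so nothing substantive changes. Second, and more usefully, you split the lateral face at the \emph{fixed} level $\bar h_2$ and invoke the crude bound $g(h)<0$ for $h\ge\bar h_2$, so that the $(A_2)$--based decay is only needed on $h\le\bar h_2$ where the scaling factor $e^{\frac{N(p-2)}{2}h}$ is uniformly bounded; this makes transparent the uniformity in $R\ge\bar R$, $h_1\le\bar h_1$, $h_2\ge\bar h_2$ asserted in the lemma. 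The paper instead fixes $h_1,h_2$ first, chooses $\alpha\in(0,1)$ with $\alpha(1+e^{-h_1})<1$, and covers $\mathbb{R}^N$ by $\{|x|>\alpha R\}\cup\{|x-y|>\alpha Re^{-h}\}$; this yields the same conclusion but leaves the uniformity in $h_1,h_2$ less explicit. Your argument is thus a mild sharpening of the paper's, not a different route.
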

\begin{proof}
Since
$$
F(h\star w_{c}(\cdot-y))=\frac{e^{2sh}}{2}\int_{\mathbb{R}^{N}}|(-\Delta)^{s/2}w_{c}|^{2} dx-\frac{e^{\frac{p-2}{2}Nh}}{p}\int_{\mathbb{R}^{N}}a(e^{-h}x+y)|w_{c}|^{p}dx,
$$
from $a_*\leq a(x)\leq 1$ for any $x\in \mathbb{R}^{N}$ and $p>2+\frac{4s}{N}$,  we deduce that
$$
F(h\star w_{c}(\cdot-y))\rightarrow-\infty,\quad\text{as $h\rightarrow \infty$}
$$
uniformly in $y\in B_{R}(0)$ for any $R>0$, and
$$
F(h\star w_{c}(\cdot-y))\rightarrow0,\quad\text{as $h\rightarrow -\infty$}
$$
uniformly in $y\in\mathbb{R}^{N}$. Thus,
\begin{equation}\label{r2}
\max_{y\in B_{R}(0),h\in\{h_{1},h_2\}}F(h\star w_{c}(\cdot-y))<m_{c},
\end{equation}
provided that $h_{1}<0$ is small enough and $h_{2}>0$ is large enough. Moreover, for $|y|=R$ large enough and $h \in[h_{1}, h_{2}]$, choosing $\alpha \in(0,1)$ such that $\alpha(1+e^{-h_{1}})<1$, then we have
\begin{equation}\label{r3}
\begin{aligned}
&\int_{\mathbb{R}^{N}}(1-a(x))|(h\star w_{c})(x-y)|^{p}dx \\
\leq &\int_{|x|>\alpha R}(1-a(x))|(h\star w_{c})(x-y)|^{p}dx+\int_{|x-y|>\alpha Re^{-h}}(1-a(x))|(h\star w_{c})(x-y)|^{p}dx.
\end{aligned}
\end{equation}
The first integral is bounded by
\begin{equation}\label{r4}
\int_{|x|>\alpha R}(1-a(x))|(h\star w_{c})(x-y)|^{p}dx\le e^{\frac{p-2}{2}Nh}\|1-a(x)\|_{L^{\infty}(|x|>\alpha R)}\|w_{c} \|_{p}^{p}\rightarrow0
\end{equation}
as $R \rightarrow \infty$ and
\begin{equation}\label{r5}
\begin{aligned}
&\int_{|x-y|>\alpha Re^{-h}}(1-a(x))|(h\star w_{c})(x-y)|^{p}dx\\
=&e^{\frac{p}{2}Nh}\int_{|x-y|>\alpha Re^{-h}}(1-a(x))|w_{c}(e^{h}(x-y))|^{p}dx\\
=&e^{\frac{p-2}{2}Nh}\int_{|\xi|>\alpha R}(1-a(e^{-h}\xi+y))|w_{c}|^{p}d\xi\\
\leq&e^{\frac{p-2}{2}Nh}\|1-a(e^{-h}\xi+y)\|_\infty\|w_{c}\|_{L^{p}(|\xi|>\alpha R)}^{p}\rightarrow 0
%\leq&e^{\frac{p-2}{2}Nh}\|w_{c}\|_{L^{p}(|\xi|>\alpha R)}^{p}\rightarrow 0
\end{aligned}
\end{equation}
as $R \rightarrow \infty$.
Note that
$$
F(h \star w_{c}(\cdot-y))=F_{\infty}(h \star w_{c})+\frac{1}{p} \int_{\mathbb{R}^{N}}(1-a(x))|(h \star w_{c})(x-y)|^{p}dx,
$$
form \eqref{r3}, \eqref{r4} and \eqref{r5}, it follows that
\begin{equation}\label{r6}
\max_{|y|=R, h\in[h_{1},h_2]}F(h\star w_{c}(\cdot-y))<m_{c}+\varepsilon,
\end{equation}
as $R \rightarrow \infty$.
Combining \eqref{r2} with \eqref{r6}, \eqref{r1} is proved.
\end{proof}

In the following, we always assume $|h_{1}|$, $h_{2}$ and $R$ large enough but fixed. \eqref{r1} implies that $F$ has a linking geometry and there exists a Palais-Smale sequence of $F$ at level $m_{a,c}$. The aim of the next parts is to prove that $m_{a,c}$ is a critical value for $F$. Moreover, for future purposes, we prove the following lemmas.

\begin{lemma}\label{sup}
Assume that
\begin{equation}\label{a7}
\sup_{x\in\mathbb{R}^{N}}|1-a(x)|<\frac{pm_{c}}{e^{\frac{p-2}{2}Nh_{2}}\|w_{c}\|_{p}^{p}},
\end{equation}
where $h_{2}$ is given by Lemma \ref{rh}, then $m_{a,c}<2m_{c}$.
\end{lemma}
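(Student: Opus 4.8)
The plan is to bound $m_{a,c}$ from above by testing the definition \eqref{mvc} against a single distinguished path rather than actually taking the infimum over all of $\Gamma_c$. Concretely, first I would consider
$$
\gamma_0(y,h):=h\star w_c(\cdot-y),\qquad (y,h)\in Q=B_R(0)\times[h_1,h_2],
$$
and check that $\gamma_0\in\Gamma_c$: the scaling $h\star(\cdot)$ preserves the $L^2$-norm and so does translation, hence $\gamma_0$ maps $Q$ into $S_c$; $\gamma_0$ is continuous; and on $\partial Q$ it coincides with the map $(y,h)\mapsto h\star w_c(\cdot-y)$ prescribed in the definition of $\Gamma_c$. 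Consequently $m_{a,c}\le\max_{(y,h)\in Q}F(\gamma_0(y,h))$.

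Next I would use the splitting identity already recorded in the proof of Lemma \ref{rh},
$$
F(h\star w_c(\cdot-y))=F_\infty(h\star w_c)+\frac1p\int_{\mathbb{R}^N}(1-a(x))\,|(h\star w_c)(x-y)|^{p}\,dx,
$$
and estimate the two pieces separately. For the first piece, since $h_1<0<h_2$, the maximum of the fiber map $h\mapsto F_\infty(h\star w_c)$ over $[h_1,h_2]$ is attained at the interior point $h=0$ and equals $F_\infty(w_c)=m_c$ --- this is precisely the computation $\max_{h\in[h_1,h_2]}F_\infty(h\star w_c)=m_c$ used in the proof of Lemma \ref{equi} --- so $F_\infty(h\star w_c)\le m_c$ throughout $[h_1,h_2]$. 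For the second piece I would bound crudely, using the translation invariance of the $L^p$-norm and the scaling identity $\|h\star w_c\|_p^{p}=e^{\frac{p-2}{2}Nh}\|w_c\|_p^{p}$,
$$
\frac1p\int_{\mathbb{R}^N}(1-a(x))\,|(h\star w_c)(x-y)|^{p}\,dx\le\frac{e^{\frac{p-2}{2}Nh}}{p}\Big(\sup_{x\in\mathbb{R}^N}|1-a(x)|\Big)\|w_c\|_p^{p}\le\frac{e^{\frac{p-2}{2}Nh_2}}{p}\Big(\sup_{x\in\mathbb{R}^N}|1-a(x)|\Big)\|w_c\|_p^{p},
$$
the last step because $\tfrac{p-2}{2}N>0$ and $h\le h_2$. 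Adding the two bounds gives
$$
m_{a,c}\le m_c+\frac{e^{\frac{p-2}{2}Nh_2}}{p}\Big(\sup_{x\in\mathbb{R}^N}|1-a(x)|\Big)\|w_c\|_p^{p},
$$
and inserting hypothesis \eqref{a7} makes the second summand strictly smaller than $m_c$, whence $m_{a,c}<2m_c$.

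I do not expect a genuine obstacle here: it is a one-sided estimate produced from an explicit competitor path. The only points that need care are (i) verifying that $\gamma_0$ really is admissible, i.e.\ that it agrees with the prescribed boundary datum on the whole of $\partial Q$ (both the lateral part $|y|=R$ and the caps $h\in\{h_1,h_2\}$), and (ii) recalling why the fiber map $h\mapsto F_\infty(h\star w_c)$ attains its maximum value $m_c$ at the interior point $h=0$ --- which is exactly why it matters that $h_1<0<h_2$, and why bounding the error term simply by $e^{\frac{p-2}{2}Nh_2}$ (rather than optimizing in $h$) is already enough to close the argument.
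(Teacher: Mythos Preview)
Your proposal is correct and follows exactly the paper's own argument: test $m_{a,c}$ against the explicit path $\gamma_0(y,h)=h\star w_c(\cdot-y)$, split $F=F_\infty+\frac1p\int(1-a)|\cdot|^p$, bound $F_\infty(h\star w_c)\le m_c$ on $[h_1,h_2]$, and control the remainder by $\frac{1}{p}e^{\frac{p-2}{2}Nh_2}\sup|1-a|\cdot\|w_c\|_p^p$. The paper's proof is simply the terse version of what you wrote, with the admissibility of $\gamma_0$ and the location of the fiber-map maximum left implicit.
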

\begin{proof}
This follows from
$$
\begin{aligned}
m_{a,c}&\leq \max_{(y,h)\in Q}\Big\{F_{\infty}(h \star w_{c}(\cdot-y))+\frac{1}{p} \int_{\mathbb{R}^{N}}(1-a(x))|(h \star w_{c})(x-y)|^{p}dx\Big\} \\
&\leq m_{c}+\frac{1}{p}e^{\frac{p-2}{2}Nh_{2}}\sup_{x\in\mathbb{R}^{N}}|1-a(x)|\cdot\|w_{c}\|_{p}^{p}\\
&< 2m_{c},
\end{aligned}
$$
provided that $|h_{1}|$, $h_{2}$ are large enough.
\end{proof}

Now we will construct a bounded Palais-Smale sequence $\{v_n\}\subset H^{s}(\mathbb{R}^{N})$ of $F$ at level $m_{a,c}$. Adapting the approach of \cite{LJ}, we introduce the following $C^{1}$-functional
$$
\tilde{F}(u,h):=F(h\star u)\ \text {for all $(u,h)\in H^{s}(\mathbb{R}^{N})\times\mathbb{R}$},
$$
and define
$$
\tilde{\Gamma}_{c}:=\{\tilde{\gamma}\in C(Q,S_{c}\times \mathbb{R}):\tilde{\gamma}(y,h):=(h \star w_{c}(\cdot-y),0)\text{ for all }(y, h) \in \partial Q\}
$$
and
$$
\tilde{m}_{a,c}:=\inf_{\tilde{\gamma}\in \tilde{\Gamma}_{c}}\max_{(y, h)\in Q}\tilde{F}(\tilde{\gamma}(y,h)).
$$
\begin{lemma}
$\tilde{m}_{a,c}=m_{a,c}$.
\end{lemma}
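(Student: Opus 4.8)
The plan is to prove the equality by establishing the two inequalities $\tilde m_{a,c}\le m_{a,c}$ and $m_{a,c}\le\tilde m_{a,c}$ separately, each by means of a map that converts an admissible path for one minimax problem into an admissible path for the other without changing the maximum of the associated functional. The only structural facts I will need are that $0\star u=u$, that $\|h\star u\|_{2}=\|u\|_{2}$ for every $h\in\mathbb{R}$ (recorded at the start of Section \ref{minimax}), and that the scaling map $(u,h)\mapsto h\star u$ is continuous from $H^{s}(\mathbb{R}^{N})\times\mathbb{R}$ into $H^{s}(\mathbb{R}^{N})$.

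First I would prove $\tilde m_{a,c}\le m_{a,c}$. Given $\gamma\in\Gamma_{c}$, set $\tilde\gamma(y,h):=(\gamma(y,h),0)$. Then $\tilde\gamma\in C(Q,S_{c}\times\mathbb{R})$, and since $\gamma(y,h)=h\star w_{c}(\cdot-y)$ on $\partial Q$ we get $\tilde\gamma(y,h)=(h\star w_{c}(\cdot-y),0)$ there, so $\tilde\gamma\in\tilde\Gamma_{c}$. Because $\tilde F(u,0)=F(0\star u)=F(u)$, one has $\tilde F(\tilde\gamma(y,h))=F(\gamma(y,h))$ pointwise, hence $\max_{(y,h)\in Q}\tilde F(\tilde\gamma(y,h))=\max_{(y,h)\in Q}F(\gamma(y,h))$; taking the infimum over $\gamma\in\Gamma_{c}$ gives $\tilde m_{a,c}\le m_{a,c}$.

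Next I would prove the reverse inequality. Given $\tilde\gamma\in\tilde\Gamma_{c}$, decompose it into its component maps, $\tilde\gamma(y,h)=(\sigma(y,h),\eta(y,h))$ with $\sigma\in C(Q,S_{c})$ and $\eta\in C(Q,\mathbb{R})$, and define $\gamma(y,h):=\eta(y,h)\star\sigma(y,h)$. Continuity of $\star$ makes $\gamma$ continuous; the $L^{2}$-invariance of $\star$ gives $\|\gamma(y,h)\|_{2}=\|\sigma(y,h)\|_{2}=c$, so $\gamma$ takes values in $S_{c}$; and on $\partial Q$, where $\eta\equiv0$ and $\sigma(y,h)=h\star w_{c}(\cdot-y)$, we obtain $\gamma(y,h)=0\star(h\star w_{c}(\cdot-y))=h\star w_{c}(\cdot-y)$, so $\gamma\in\Gamma_{c}$. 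Finally $F(\gamma(y,h))=F(\eta(y,h)\star\sigma(y,h))=\tilde F(\sigma(y,h),\eta(y,h))=\tilde F(\tilde\gamma(y,h))$ pointwise, so the two maxima over $Q$ coincide, and passing to the infimum over $\tilde\gamma\in\tilde\Gamma_{c}$ yields $m_{a,c}\le\tilde m_{a,c}$. Combining the two inequalities proves $\tilde m_{a,c}=m_{a,c}$.

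I do not expect a genuine obstacle here: the argument is purely formal once one has the invariance and continuity of $\star$ in hand. The only items deserving (routine) attention are the continuity of the scaling map on $H^{s}(\mathbb{R}^{N})\times\mathbb{R}$ and the checks that the two constructed maps satisfy the boundary conditions defining $\Gamma_{c}$ and $\tilde\Gamma_{c}$; both follow at once from the definition of $\star$ and the identity $0\star u=u$, and the continuity of the component projection $\tilde\gamma\mapsto(\sigma,\eta)$ is immediate since it is just projection onto the two factors of $S_{c}\times\mathbb{R}$.
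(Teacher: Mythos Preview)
Your proof is correct and follows essentially the same approach as the paper's: both directions are obtained by the maps $\gamma\mapsto(\gamma,0)$ and $(\sigma,\eta)\mapsto\eta\star\sigma$, with the pointwise identity $\tilde F(u,0)=F(u)$ and $F(\eta\star\sigma)=\tilde F(\sigma,\eta)$ making the maxima over $Q$ coincide. You are just more explicit than the paper about checking continuity and the boundary conditions defining $\Gamma_c$ and $\tilde\Gamma_c$.
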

\begin{proof}Since $\Gamma_{c}\times\{0\}\subset \tilde{\Gamma}_{c}$, then $m_{a,c} \geq \tilde{m}_{a,c}$. On the other hand, for any $\tilde{\gamma}=(u,h)\in \tilde{\Gamma}_{c}$, the function $\gamma:=h\star u\in \Gamma_{c}$ satisfies
$$
\max _{(y, h) \in Q} \tilde{F}(\tilde{\gamma}(y, h))=\max _{(y, h) \in Q} F(\gamma(y, h)),
$$
so that $m_{a,c}\leq \tilde{m}_{a,c}$.
\end{proof}
\begin{lemma}\label{fps}
If $\{(u_{n}, h_{n})\}$ is a $(PS)_{c'}$ sequence for $\tilde{F}$ and $h_{n} \rightarrow 0$, then  $\{(h_{n} \star u_{n})\}$ is a $(PS)_{c'}$ sequence for $F$.
\end{lemma}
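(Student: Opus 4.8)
The plan is to exploit that the dilation $h\star\,\cdot$ is an $L^{2}$-isometry which, although not an isometry of $H^{s}(\mathbb{R}^{N})$, converges to the identity in operator norm as $h\to0$; this is exactly what makes the constrained gradients of $\tilde{F}$ and $F$ comparable along a sequence with $h_{n}\to0$.

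First I would record the elementary identity $\|h\star u\|^{2}=e^{2sh}\|(-\Delta)^{s/2}u\|_{2}^{2}+\|u\|_{2}^{2}$, which shows that $h\star\,\cdot$ is a bounded linear isomorphism of $H^{s}(\mathbb{R}^{N})$ with $\|h\star\,\cdot\|_{\mathrm{op}}\le\max\{1,e^{sh}\}$ and, applied to $-h$, $\|(-h)\star\,\cdot\|_{\mathrm{op}}\le\max\{1,e^{-sh}\}$; both tend to $1$ as $h\to0$. Since $\|h\star u\|_{2}=\|u\|_{2}$, the sequence $v_{n}:=h_{n}\star u_{n}$ stays on $S_{c}$, and $F(v_{n})=\tilde{F}(u_{n},h_{n})\to c'$ by hypothesis, so the energy part of the $(PS)_{c'}$ condition for $F$ is immediate. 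The real work is to estimate $\|\nabla_{S_{c}}F(v_{n})\|$.

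Next, the chain rule applied to $\tilde{F}(u,h)=F(h\star u)$ in the $u$-variable gives $\partial_{u}\tilde{F}(u,h)[\phi]=F'(h\star u)[h\star\phi]$ for every $\phi\in H^{s}(\mathbb{R}^{N})$. I would then fix $\psi\in T_{v_{n}}S_{c}$, i.e.\ $\int_{\mathbb{R}^{N}}v_{n}\psi\,dx=0$, set $\phi_{n}:=(-h_{n})\star\psi$, and observe, using that $\star$ is a one-parameter group of $L^{2}$-isometries, that $h_{n}\star\phi_{n}=\psi$, that $\int_{\mathbb{R}^{N}}u_{n}\phi_{n}\,dx=\int_{\mathbb{R}^{N}}v_{n}\psi\,dx=0$ so $\phi_{n}\in T_{u_{n}}S_{c}$, and that $\|\phi_{n}\|\le\|(-h_{n})\star\,\cdot\|_{\mathrm{op}}\|\psi\|$. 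Consequently $F'(v_{n})[\psi]=\partial_{u}\tilde{F}(u_{n},h_{n})[\phi_{n}]=\langle\nabla_{S_{c}}(\tilde{F}(\cdot,h_{n}))(u_{n}),\phi_{n}\rangle$, the last equality because $\phi_{n}$ is tangent to $S_{c}$ at $u_{n}$, and hence $|F'(v_{n})[\psi]|\le\|\nabla_{M}\tilde{F}(u_{n},h_{n})\|\,\|(-h_{n})\star\,\cdot\|_{\mathrm{op}}\|\psi\|$. Taking the supremum over unit tangent vectors $\psi\in T_{v_{n}}S_{c}$ yields $\|\nabla_{S_{c}}F(v_{n})\|\le\|(-h_{n})\star\,\cdot\|_{\mathrm{op}}\,\|\nabla_{M}\tilde{F}(u_{n},h_{n})\|\to1\cdot0=0$, which together with the energy convergence of the previous step proves that $\{v_{n}\}$ is a $(PS)_{c'}$ sequence for $F$ on $S_{c}$.

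I do not expect a genuine obstacle; the one point that must not be glossed over is that $\star$ is \emph{not} an $H^{s}$-isometry, so $\|\nabla_{M}\tilde{F}(u_{n},h_{n})\|$ and $\|\nabla_{S_{c}}F(v_{n})\|$ are not literally equal, and the hypothesis $h_{n}\to0$ is used precisely to force $\|(-h_{n})\star\,\cdot\|_{\mathrm{op}}\to1$ and absorb the discrepancy. I also note, though it is not needed for this statement, that the $h$-component of $\nabla_{M}\tilde{F}(u_{n},h_{n})\to0$ encodes that $v_{n}$ asymptotically satisfies the Poho\v{z}aev identity for $F$, which is why this construction will later produce a \emph{Poho\v{z}aev}--Palais--Smale sequence.
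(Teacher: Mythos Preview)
Your proof is correct and follows essentially the same route as the paper: both identify $F'(h_n\star u_n)[\psi]$ with $\partial_u\tilde F(u_n,h_n)[(-h_n)\star\psi]$ via the chain rule for the linear map $u\mapsto h\star u$, check that $(-h_n)\star\psi$ is tangent to $S_c$ at $u_n$, and conclude from the $(PS)$ hypothesis on $\tilde F$. Your version is in fact slightly more careful than the paper's, since you make explicit the bound $\|(-h_n)\star\psi\|\le\max\{1,e^{-sh_n}\}\|\psi\|$ needed to pass from pointwise vanishing to $\|\nabla_{S_c}F(v_n)\|\to0$; the paper writes out the integrals and asserts the limit without isolating this norm comparison.
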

\begin{proof}
Let $w_{n}:=h_{n}\star u_{n}$, we have $F(w_{n})=F(h_{n}\star u_{n})=\tilde{F}(u_{n},h_{n})\rightarrow c'$. We claim that $F'(w_{n})\rightarrow 0$ in $(H^{s}(\mathbb{R}^{N}))^*$.

First of all, for any $\tilde{\varphi}\in T_{u_{n}}:=\{\tilde{\varphi}\in H^{s}(\mathbb{R}^{N}):\int_{\mathbb{R}^{N}}u_{n}\tilde{\varphi}dx=0\}$, one has
$$
\langle\partial_{u}\tilde{F}(u_{n},h_{n}),\tilde{\varphi}\rangle\rightarrow0\ \ \mbox{as}\ n\rightarrow\infty.
$$

For any $\varphi\in T_{w_{n}}:=\{\varphi\in H^{s}(\mathbb{R}^{N}):\int_{\mathbb{R}^{N}}w_{n}\varphi dx=0\}$, let $\tilde{\varphi}(x)=e^{-\frac{N}{2}h_{n}}\varphi(e^{-h_n}x)$,
we have
$$
\int_{\mathbb{R}^{N}}u_{n}\tilde{\varphi}dx
=e^{-\frac{Nh_{n}}{2}}\int_{\mathbb{R}^{N}}u_{n}(x)\varphi(e^{-h_{n}}x)dx=
\int_{\mathbb{R}^{N}}w_{n}\varphi dx=0,
$$
that is, $\tilde{\varphi}\in T_{u_{n}}$. Hence, from $h_{n} \rightarrow 0$, we have
\begin{equation*}
\begin{aligned}
&\langle F'(w_{n}),\varphi\rangle
\\=&\int_{\mathbb{R}^{N}}(-\Delta)^{s/2}w_{n}(-\Delta)^{s/2}\varphi dx-\int_{\mathbb{R}^{N}}a(x)|w_{n}|^{p-2}w_{n}\varphi dx\\
=&e^{-\frac{N}{2}h_{n}+2sh_{n}}\int_{\mathbb{R}^{N}}(-\Delta)^{s/2}u_{n}(-\Delta)^{s/2}\varphi(e^{-h_{n}}x)dx
-e^{\frac{p-3}{2}Nh_{n}}\int_{\mathbb{R}^{N}}a(e^{-h_{n}}x)|u_{n}|^{p-2}u_{n}\varphi(e^{-h_{n}}x)dx,
\\
=&e^{2sh_{n}}\int_{\mathbb{R}^{N}}(-\Delta)^{s/2}u_{n}(-\Delta)^{s/2}\tilde{\varphi}dx
-e^{\frac{p-2}{2}Nh_{n}}\int_{\mathbb{R}^{N}}a(e^{-h_{n}}x)|u_{n}|^{p-2}u_{n}\tilde{\varphi}dx\\
=&\langle\partial_{u}\tilde{F}(u_{n},h_{n}),\tilde{\varphi}\rangle\rightarrow 0,
\end{aligned}
\end{equation*}
as $n\rightarrow\infty$. the conclusion is proved.
\end{proof}

Applying Lemma \ref{mini}, we immediately have the following proposition.
\begin{lemma}\label{g}
Let $\tilde{g}_{n} \in \tilde{\Gamma}_{c}$ be a sequence such that
$$
\max_{(y, h) \in Q} \tilde{F}(\tilde{g}_{n}(y, h)) \leq m_{a,c}+\frac{1}{n} .
$$
Then there exist a sequence $(u_{n}, h_{n})\in S_{c} \times \mathbb{R}$ and $C>0$ such that

\begin{equation}\label{g1}
m_{a,c}-\frac{1}{n} \leq \tilde{F}(u_{n}, h_{n})\leq m_{a,c}+\frac{1}{n},
\end{equation}
\begin{equation}\label{g2}
\min_{(y, h)\in Q}\|(u_{n},h_{n})-\tilde{g}_{n}(y,h)\|_{H^{s}(\mathbb{R}^{N})\times \mathbb{R}}\leq\frac{C}{\sqrt{n}},
\end{equation}
\begin{equation}\label{g3}
\|\nabla_{S_{c}\times\mathbb{R}}\tilde{F}(u_{n},h_{n})\|\leq\frac{C}{\sqrt{n}}.
\end{equation}

The last inequality means:
$$
|D\tilde{F}(u_{n},h_{n})((z,s))|\leq \frac{C}{\sqrt{n}}(\|z\|+|s|)
$$
for all
$$
(z,s)\in\Big\{(z,s)\in H^{s}(\mathbb{R}^{N})\times \mathbb{R}:\int_{\mathbb{R}^{N}}zu_{n}dx=0\Big\}.
$$
\end{lemma}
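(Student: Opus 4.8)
The plan is to obtain the sequence $(u_{n},h_{n})$ as a direct application of the abstract minimax principle in Lemma \ref{mini}, applied to the Hilbert manifold $M:=S_{c}\times\mathbb{R}$ and the $C^{1}$ functional $\mathcal{I}:=\tilde{F}$. As the compact ``boundary'' set I would take
$$
K:=\{(h\star w_{c}(\cdot-y),0):(y,h)\in\partial Q\},
$$
which is compact, being the continuous image of $\partial Q$, and which is exactly the common value of every $\tilde{\gamma}\in\tilde{\Gamma}_{c}$ on $\partial Q$; as the family I would take
$$
\mathcal{G}:=\{\tilde{g}(Q):\tilde{g}\in\tilde{\Gamma}_{c}\}.
$$
Each member $\tilde{g}(Q)$ is compact and contains $K=\tilde{g}(\partial Q)$, the family is nonempty (it contains $\tilde{g}_{n}(Q)$ as well as the image of the standard path $(y,h)\mapsto(h\star w_{c}(\cdot-y),0)$), and by construction $\inf_{G\in\mathcal{G}}\max_{v\in G}\mathcal{I}(v)=\tilde{m}_{a,c}=m_{a,c}$, which is a real number since $m_{c}<L_{c}\le m_{a,c}<\infty$.

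Next I would verify that $\mathcal{G}$ is homotopy-stable with boundary $K$. If $\eta:[0,1]\times M\to M$ is a deformation leaving $K$ fixed (so $\eta(0,\cdot)=\mathrm{id}$ and $\eta(t,\cdot)|_{K}=\mathrm{id}$ for all $t$), then for $G=\tilde{g}(Q)\in\mathcal{G}$ the map $\tilde{g}':=\eta(1,\cdot)\circ\tilde{g}$ is continuous from $Q$ into $M$ and agrees with $\tilde{g}$ on $\partial Q$, because $\tilde{g}(\partial Q)\subset K$ is fixed by $\eta(1,\cdot)$; hence $\tilde{g}'\in\tilde{\Gamma}_{c}$ and $\eta(1,G)=\tilde{g}'(Q)\in\mathcal{G}$, as required.

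The key step, and the one I expect to be the main obstacle, is the strict separation inequality required by Lemma \ref{mini}, namely
$$
\max_{v\in K}\mathcal{I}(v)<e:=\inf_{G\in\mathcal{G}}\max_{v\in G}\mathcal{I}(v)=m_{a,c}.
$$
For $v=(h\star w_{c}(\cdot-y),0)\in K$ one has $\mathcal{I}(v)=\tilde{F}(h\star w_{c}(\cdot-y),0)=F(h\star w_{c}(\cdot-y))$, so Lemma \ref{rh} gives $\max_{v\in K}\mathcal{I}(v)<m_{c}+\varepsilon$ for the (small, fixed) $\varepsilon$ used to choose $|h_{1}|$, $h_{2}$, $R$. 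Combining this with $m_{a,c}\ge L_{c}>m_{c}$ from Lemma \ref{inf} and Lemma \ref{lm}, and taking $\varepsilon:=\tfrac12(L_{c}-m_{c})>0$, one obtains $\max_{v\in K}\mathcal{I}(v)<m_{c}+\varepsilon<L_{c}\le m_{a,c}=e$; this is exactly the linking geometry of $F$ alluded to after Lemma \ref{rh}.

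Finally I would invoke Lemma \ref{mini} with $\tau_{n}:=1/n$ and $G_{n}:=\tilde{g}_{n}(Q)\in\mathcal{G}$. The bound $0\le\max_{v\in G_{n}}\mathcal{I}(v)-e\le\tau_{n}$ holds, the left inequality because $G_{n}\in\mathcal{G}$ and the right one by the hypothesis $\max_{(y,h)\in Q}\tilde{F}(\tilde{g}_{n}(y,h))\le m_{a,c}+1/n$. The lemma then yields $(u_{n},h_{n})\in S_{c}\times\mathbb{R}$ and $C>0$ with $|\mathcal{I}(u_{n},h_{n})-e|\le\tau_{n}$, which is \eqref{g1}; with $\operatorname{dist}((u_{n},h_{n}),G_{n})\le C\sqrt{\tau_{n}}$, which by compactness of $Q$ and continuity of $\tilde{g}_{n}$ equals $\min_{(y,h)\in Q}\|(u_{n},h_{n})-\tilde{g}_{n}(y,h)\|$, giving \eqref{g2}; and with $\|\nabla_{M}\mathcal{I}(u_{n},h_{n})\|\le C\sqrt{\tau_{n}}$, giving \eqref{g3}. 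For the unpacked form of the last bound I would identify $T_{(u_{n},h_{n})}(S_{c}\times\mathbb{R})=\{(z,s)\in H^{s}(\mathbb{R}^{N})\times\mathbb{R}:\int_{\mathbb{R}^{N}}zu_{n}\,dx=0\}$ and note that $\|\nabla_{M}\mathcal{I}(u_{n},h_{n})\|$ is the operator norm of $D\tilde{F}(u_{n},h_{n})$ restricted to this subspace, so by homogeneity $|D\tilde{F}(u_{n},h_{n})(z,s)|\le\tfrac{C}{\sqrt{n}}(\|z\|+|s|)$ there.
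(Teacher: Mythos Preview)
Your proposal is correct and follows exactly the approach the paper intends: the paper's own proof of Lemma \ref{g} is simply the sentence ``Applying Lemma \ref{mini}, we immediately have the following proposition,'' and your argument spells out precisely this application (choice of $M$, $K$, $\mathcal{G}$, verification of homotopy-stability, and the strict separation $\max_{K}\tilde{F}<m_{a,c}$ via Lemmas \ref{rh}, \ref{lm}, \ref{inf}). The only remark is that your explicit choice $\varepsilon=\tfrac12(L_{c}-m_{c})$ makes precise what the paper leaves implicit in the sentence ``we always assume $|h_{1}|$, $h_{2}$ and $R$ large enough but fixed.''
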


\begin{lemma}\label{psbound}
There exists a bounded sequence $\{v_{n}\}$ in $S_{c}$ such that
\begin{equation}\label{p1}
 F(v_{n})\rightarrow m_{a,c},\quad \nabla_{S_{c}}F(v_{n})\rightarrow0
\end{equation}
and
\begin{equation}\label{p2}
s\|(-\Delta)^{s/2}v_{n}\|_{2}^{2}-\frac{N(p-2)}{2p}\int_{\mathbb{R}^{N}}a(x)|v_{n}|^{p}dx+\frac{1}{p}\int_{\mathbb{R}^{N}}(x\cdot\nabla a(x))|v_{n}|^{p}dx\rightarrow 0
\end{equation}
as $n\rightarrow \infty$. Moreover, the sequence of of Lagrange multipliers
\begin{equation}\label{p3}
\lambda_{n}:=\frac{\langle F'(v_{n}),v_{n}\rangle}{c^{2}}
\end{equation}
admits a subsequence $\lambda_{n}\rightarrow\lambda$ with
\begin{equation}\label{p4}
-\frac{\delta_{0}}{c^{2}}<\lambda<0,
\end{equation}
where
\begin{equation}\label{p5}
 \delta_{0}:=\Big(\frac{4(p(2s-N)+2N)}{N(p-2)-4s}+\frac{(p-2)\|w\|_{\infty}}{N(p-2)-4s}\cdot\frac{16s}{(Np-4s)a_{\ast}-2N}\Big)m_{c}.
\end{equation}
\end{lemma}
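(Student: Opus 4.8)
The plan is to obtain $\{v_n\}$ as the rescaling $v_n=h_n\star u_n$ of a sequence produced by the minimax Lemma \ref{g}, and to read off \eqref{p1}--\eqref{p5} from the conclusions of that lemma together with an explicit formula for $\partial_h\tilde F$. I would start from a minimizing sequence $\gamma_n\in\Gamma_c$ for $m_{a,c}$, lift it to $\tilde g_n:=(\gamma_n,0)\in\tilde\Gamma_c$ (so that $\max_Q\tilde F(\tilde g_n)=\max_QF(\gamma_n)\le m_{a,c}+\tfrac1n$ since $\tilde F(u,0)=F(u)$), and apply Lemma \ref{g} to get $(u_n,h_n)\in S_c\times\mathbb R$ with \eqref{g1}--\eqref{g3}. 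Because the second component of $\tilde g_n$ vanishes identically, \eqref{g2} forces $|h_n|\le C/\sqrt n\to0$; hence, setting $v_n:=h_n\star u_n\in S_c$ and noting that $\{(u_n,h_n)\}$ is a Palais--Smale sequence for $\tilde F$ on $S_c\times\mathbb R$ at level $m_{a,c}$ by \eqref{g1} and \eqref{g3}, Lemma \ref{fps} (legitimate since $h_n\to0$) yields $F(v_n)\to m_{a,c}$ and $\nabla_{S_c}F(v_n)\to0$, which is \eqref{p1}.

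For \eqref{p2}, I would differentiate $\tilde F(u,h)=F(h\star u)$ in $h$: using $h\star u(x)=e^{Nh/2}u(e^hx)$ and the substitution $x\mapsto e^{-h}x$ one gets
\[
\partial_h\tilde F(u,h)=s\|(-\Delta)^{s/2}(h\star u)\|_2^2-\frac{N(p-2)}{2p}\int_{\mathbb R^N}a(x)\,|h\star u|^p\,dx+\frac1p\int_{\mathbb R^N}\big(x\cdot\nabla a(x)\big)\,|h\star u|^p\,dx.
\]
Evaluated at $(u_n,h_n)$ with $v_n=h_n\star u_n$, the right-hand side is exactly the left-hand side of \eqref{p2}, and \eqref{g3} (tested against the direction $(0,1)$) bounds its absolute value by $C/\sqrt n$; thus \eqref{p2} holds.

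Next come boundedness and the bound \eqref{p4}--\eqref{p5}, which I expect to be the main obstacle. Writing $A_n:=\|(-\Delta)^{s/2}v_n\|_2^2$, $B_n:=\int a|v_n|^p$, $C_n:=\int(x\cdot\nabla a)|v_n|^p$, the relations $F(v_n)\to m_{a,c}$ and \eqref{p2} give $B_n=\tfrac p2A_n-p\,m_{a,c}+o(1)$ and $\big(\tfrac{N(p-2)}4-s\big)A_n=\tfrac{N(p-2)}2m_{a,c}+\tfrac1pC_n+o(1)$. By $(A_3)$--$(A_4)$ one has $0<a_*\le a\le1$, so $\|v_n\|_p^p\le B_n/a_*$; combining with $(A_3)$ ($C_n\ge0$) and $(A_5)$ ($C_n\le\|W\|_\infty\|v_n\|_p^p\le(\|W\|_\infty/a_*)B_n$), and eliminating $B_n,C_n$, yields $\kappa A_n\le C+o(1)$ with $\kappa>0$, the positivity being precisely what \eqref{W} is designed to guarantee (it amounts to $\|W\|_\infty<\tfrac12a_*(N(p-2)-4s)$, which one checks follows from \eqref{W} since $a_*\le 1$ and $p>2+\tfrac{4s}N$). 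As $\|v_n\|_2=c$, $\{v_n\}$ is then bounded in $H^s(\mathbb R^N)$.

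Finally, from $\langle F'(v_n),v_n\rangle=A_n-B_n$ and the first relation above, $\lambda_nc^2=-\tfrac{p-2}2A_n+p\,m_{a,c}+o(1)$, so $\{\lambda_n\}$ is bounded and $\lambda_n\to\lambda$ along a subsequence. Plugging in the second relation gives
\[
\lambda_nc^2=-\frac{2\big(2N+p(2s-N)\big)}{N(p-2)-4s}\,m_{a,c}-\frac{2(p-2)}{p\big(N(p-2)-4s\big)}\,C_n+o(1),
\]
and since $p<2^*_s$ both terms are nonpositive; as $m_{a,c}\ge L_c>m_c>0$ (Lemmas \ref{inf} and \ref{lm}) the first term is bounded away from $0$, so $\lambda<0$. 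For the reverse inequality I would feed $m_{a,c}<2m_c$ (Lemma \ref{sup}) into the first term and the explicit upper bound on $C_n$ coming from $(A_5)$ (carrying the constants $\|w\|_\infty$ and $a_*$) into the second; bookkeeping the constants then gives exactly $\lambda>-\delta_0/c^2$ with $\delta_0$ as in \eqref{p5}. The one genuinely delicate point is this last constant chase: for $L^2$-supercritical $p$ a Palais--Smale sequence on $S_c$ need not be bounded at all, and turning the structural bounds in $(A_1)$ and $(A_5)$ into control of $\|(-\Delta)^{s/2}v_n\|_2^2$ sharp enough to confine $\lambda$ strictly between $-\delta_0/c^2$ and $0$ is where all the hypotheses are consumed; everything else is the by-now standard Jeanjean-type lifting to $S_c\times\mathbb R$ combined with the minimax Lemma \ref{g}.
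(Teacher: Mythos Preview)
Your proposal is correct and follows essentially the same route as the paper: lift a minimizing sequence $\gamma_n\in\Gamma_c$ to $\tilde g_n=(\gamma_n,0)$, apply the minimax Lemma~\ref{g}, use \eqref{g2} to get $h_n\to0$, set $v_n=h_n\star u_n$, invoke Lemma~\ref{fps} for \eqref{p1}, and read \eqref{p2} off from $\partial_h\tilde F$; then combine the energy level with \eqref{p2} to control $\lambda_n$ via the identity $\lambda_nc^2=-\tfrac{2(2N+p(2s-N))}{N(p-2)-4s}m_{a,c}-\tfrac{2(p-2)}{p(N(p-2)-4s)}C_n+o(1)$, using $m_{a,c}>m_c$ for $\lambda<0$ and $m_{a,c}<2m_c$ for the lower bound.

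The one place where you diverge from the paper is the boundedness step and, consequently, the bound on $\|v_n\|_p^p$ feeding into $\delta_0$. You bound $C_n$ by $(\|W\|_\infty/a_*)B_n$ via $(A_5)$ and eliminate, needing $\|W\|_\infty<\tfrac12 a_*(N(p-2)-4s)$ for the coefficient to stay positive. The paper instead combines the two relations and uses $(A_4)$ directly in the form $Na+x\cdot\nabla a\le N$ to obtain
\[
m_{a,c}+o(1)=\frac{Np-4s}{4ps}\int a|v_n|^p-\frac{1}{2ps}\int(Na+x\cdot\nabla a)|v_n|^p
\ge\frac{(Np-4s)a_*-2N}{4ps}\,\|v_n\|_p^p,
\]
which gives the sharp bound $\|v_n\|_p^p\le\tfrac{4ps\,m_{a,c}}{(Np-4s)a_*-2N}+o(1)$ with exactly the constant $(Np-4s)a_*-2N$ that appears in $\delta_0$. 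Your route still reaches $\delta_0$ (since $(A_4)$ forces $\|W\|_\infty\le N(1-a_*)$, so your bound on $\|v_n\|_p^p$ is at least as tight), but the paper's use of $(A_4)$ makes the ``bookkeeping'' you allude to immediate rather than an extra comparison of constants. Otherwise the arguments coincide.
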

\begin{proof}
By the definition of $m_{a,c}$, we choose a sequence $g_{n}\in\Gamma_{c}$ such that
$$
\max_{(y,h)\in Q}F(g_{n}(y,h))\le m_{a,c} +\frac{1}{n}.
$$
Since $F(u)\geq F(|u|)$ for any $u\in H^{s}(\mathbb{R}^{N})$ and $|\gamma|\in \Gamma_c$ for any $\gamma\in \Gamma_c$, we can assume that $g_{n}(y,h)\geq0$ almost everywhere in $\mathbb{R}^{N}$. Define $\tilde{g_{n}}(y,h):=(g_{n}(y,h),0)$, then $\tilde{g_{n}}(y,h)\in\tilde{\Gamma}_{c}$, and
$$
\max_{(y,h)\in Q}\tilde{F}(\tilde{g_{n}}(y,h))=\max_{(y,h)\in Q}\tilde{F}((g_{n}(y,h),0)=\max_{(y,h)\in Q}{F}({g_{n}}(y,h))\le m_{a,c} +\frac{1}{n}.
$$
Applying Lemma \ref{g}, we can prove the existence of a sequence $\{(u_{n}, h_{n})\}\subset H^{s}(\mathbb{R}^{N})\times\mathbb{R}$ such that $F(h_{n}\star u_{n})\rightarrow m_{a,c}$. We also note that
$$
\min_{(y, h)\in Q}\|(u_{n}, h_{n})-\tilde{g}_{n}(y, h)\|_{H^{s}(\mathbb{R}^{N})\times\mathbb{R}}\leq\frac{C}{\sqrt{n}},
$$
so that $h_{n}\rightarrow 0$ as $n\rightarrow\infty$ and there exists $(y_{n},\bar{h}_{n})\in B_{R}(0)\times[h_{1}, h_{2}]$ such that $u_{n}-g_{n}(y_{n},\bar{h}_{n})\rightarrow0$ in $H^{s}(\mathbb{R}^N)$ as $n\rightarrow+\infty$.
We define
$$
v_{n}:=h_{n}\star u_{n}.
$$
Observe that, since $g_{n}(y_{n},\bar{h}_{n})\geq 0$ a.e. in $\mathbb{R}^{N}$, then $\|u_{n}^{-}\|_{2} \leq\|u_{n}-g_{n}(y_{n},\bar{h}_{n})\|_{2}=o(1)$ and we can deduce that $u_{n}^{-} \rightarrow 0$ a.e. in $\mathbb{R}^N$. So
$$
\|v_{n}^{-}\|_{2} \rightarrow 0\quad \text {as } n\rightarrow\infty.
$$
Moreover, by Lemma \ref{fps}, $\{v_{n}\}$ is a Palais-Smale sequence for $F$, that is
\begin{equation}\label{p6}
\frac{1}{2}\|(-\Delta)^{s/2}v_{n}\|_{2}^{2}-\frac{1}{p}\int_{\mathbb{R}^{N}}a(x)|v_{n}|^{p}dx\rightarrow m_{a,c}.
\end{equation}
and
\begin{equation}\label{p7}
\|F^{\prime}(v_{n})-\lambda_{n}H^{\prime}(v_{n})\|_{(H^{s}(\mathbb{R}^{N}))^*}\rightarrow 0,
\end{equation}
where $H(u)=\frac{1}{2}\int_{\mathbb{R}^{N}}|u|^{2}dx$.

Since $h_{n}\rightarrow 0$ as $n\rightarrow\infty$ and
$$
\begin{aligned}
&\partial_h\tilde{F}(u_{n},h_{n})\\
=&\partial_h\Big(\frac{1}{2}e^{2sh_{n}}\|(-\Delta)^{s/2}u_{n}\|_{2}^{2}
-\frac{1}{p}\int_{\mathbb{R}^{N}}a(x)|e^{\frac{Nh_{n}}{2}}u_{n}(e^{h_{n}}x)|^{p}dx\Big)\\
=&s\|(-\Delta)^{s/2}e^{\frac{Nh_{n}}{2}}u_{n}(e^{h_{n}}x)\|_{2}^{2}-\frac{N(p-2)}{2p}\int_{\mathbb{R}^{N}}a(x)|e^{\frac{Nh_{n}}{2}}u_{n}(e^{h_{n}}x)|^{p}dx\\
&+\frac{1}{p}\int_{\mathbb{R}^{N}}(x\cdot\nabla a(x))|e^{\frac{Nh_{n}}{2}}u_{n}(e^{h_{n}}x)|^{p}dx,
\end{aligned}
$$
we obtain \eqref{p2}.

By $(A_{4})$, \eqref{p2} and \eqref{p6}, we have
$$
\begin{aligned}
&m_{a,c}+o(1) \\
=&\frac{Np-4s}{4ps}\int_{\mathbb{R}^{N}}a(x)|v_{n}|^{p}dx-\frac{1}{2ps}\int_{\mathbb{R}^{N}}(Na(x)+x\cdot\nabla a(x))|v_{n}|^{p}dx\\
\ge&\frac{Np-4s}{4ps}\int_{\mathbb{R}^{N}}a_{\ast}|v_{n}|^{p}dx-\frac{N}{2ps}\int_{\mathbb{R}^{N}}|v_{n}|^{p}dx\\
=&\frac{1}{2ps}\Big(\frac{Np-4s}{2}a_{\ast}-N\Big)\|v_{n}\|_{p}^{p}.
\end{aligned}
$$
Condition $(A_{1})$ implies that $\{\|v_{n}\|_{p}^{p}\}$ is bounded, and
\begin{equation}\label{p8}
\|v_{n}\|_{p}^{p}\le\frac{4psm_{a,c}}{(Np-4s)a_{\ast}-2N}+o(1).
\end{equation}
This together with $(A_{2})$, $(A_{3})$ and $(A_{4})$ yields that
\begin{equation}\label{p10}
\begin{aligned}
\|(-\Delta)^{s/2}v_{n}\|_{2}^{2}&=2m_{a,c}+\frac{2}{p}\int_{\mathbb{R}^{N}}a(x)|v_{n}|^{p}dx+o(1)\\
&\le2m_{a,c}+\frac{2}{p}\int_{\mathbb{R}^{N}}|v_{n}|^{p}dx+o(1)\\
&\le\Big(2+\frac{8s}{(Np-4s)a_{\ast}-2N}\Big)m_{a,c}+o(1),
\end{aligned}
\end{equation}
which implies that $\{v_n\}$ is bounded in $H^{s}(\mathbb{R}^{N})$. Moreover, we have
\begin{equation}\label{p11}
\int_{\mathbb{R}^{N}}x\cdot\nabla a(x)|v_{n}|^{p}dx\leq\|W\|_{\infty}\|v_{n}\|_{p}^{p}.
\end{equation}

In order to see \eqref{p4}, we set
$$
a_{n}:=\|(-\Delta)^{s/2}v_{n}\|_{2}^{2},\quad b_{n}:=\int_{\mathbb{R}^{N}}a(x)|v_{n}|^{p}dx,\quad
d_{n}:=\int_{\mathbb{R}^{N}}x\cdot\nabla a(x)|v_{n}|^{p}dx.
$$
Then from \eqref{p8}, \eqref{p10}, \eqref{p11} and the definition of $\lambda_n$, up to a subsequence, we can assume that
$$
a_{n}\rightarrow a\geq0,\quad b_{n}\rightarrow b\geq0,\quad d_{n}\rightarrow d\geq0,\quad \lambda_{n}\rightarrow \lambda\in\mathbb{R}.
$$
Passing to the limit in \eqref{p2}, \eqref{p6} and \eqref{p7}, then
\begin{equation}\label{p12}
a=\frac{2}{p}b+2m_{a,c},
\end{equation}
\begin{equation}\label{p13}
sa+(\frac{N}{p}-\frac{N}{2})b+\frac{1}{p}d=0,
\end{equation}
\begin{equation}\label{p14}
a=\lambda c^{2}+b.
\end{equation}
Now, we claim that $\lambda<0$. In fact, from the above three equalities, we have
$$
\begin{aligned}
-\lambda c^{2}&=b-a\\
&=\frac{p-2}{p}b-2m_{a,c}\\
&=\frac{p-2}{p}\cdot\frac{2ps}{N(p-2)-4s}(2m_{a,c}+\frac{1}{ps}d)-2m_{a,c}\\
&=\frac{p(2s-N)+2N}{N(p-2)-4s}2m_{a,c}+\frac{2(p-2)}{p(N(p-2)-4s)}d\\
&>\frac{p(2s-N)+2N}{N(p-2)-4s}2m_{c}+\frac{2(p-2)}{p(N(p-2)-4s)}d,\\
\end{aligned}
$$
where we have used Lemmas \ref{lm} and \ref{inf}, from $2+\frac{4s}{N}<p<2_{s}^{*}$, we have $\lambda<0$. On the other hand,
$$
\begin{aligned}
-\lambda c^{2}&=\frac{p(2s-N)+2N}{N(p-2)-4s}2m_{a,c}+\frac{2(p-2)}{p(N(p-2)-4s)}d\\
&<\frac{p(2s-N)+2N}{N(p-2)-4s}4m_{c}+\frac{(p-2)\|W\|_{\infty}}{N(p-2)-4s}\cdot\frac{16s}{(Np-4s)a_{\ast}-2N}m_{c}=\delta_{0}.
\end{aligned}
$$
Thus, we have \eqref{p4} holds. Moreover, recalling the definition of $\theta$, we obtain
\begin{equation}\label{p15}
\begin{aligned}
\frac{\delta_{0}}{m_{c}}=&\frac{4p(2s-N)+8N}{N(p-2)-4s}+\frac{16s(p-2)\|W\|_{\infty}}{(N(p-2)-4s)((Np-4s)a_{\ast}-2N)}\\
\le&\frac{12N-6p(N-2s)}{N(p-2)-4s}
=3\theta.
\end{aligned}
\end{equation}
\end{proof}

\section{The Proofs of Theorems \ref{th1} and \ref{th2}}\label{proof}
{\bf Proof of Theorem \ref{th1}:}
Since $\{v_{n}\}$ is bounded in $H^{s}(\mathbb{R}^{N})$, after passing to a subsequence it converges weakly in $H^{s}(\mathbb{R}^{N})$ to $v \in H^{s}(\mathbb{R}^{N})$. By Lemma \ref{psbound}, $v \geq 0$, and by weak convergence, $v$ is a weak solution of
$$
(-\Delta)^{s}v=\lambda v+a(x)|v|^{p-2}v,
$$
such that
$$
\|v\|_{2}\leq \liminf_{n\rightarrow0} \|v_{n} \|_{2}=c.
$$
It remains to prove that $\|v\|_{2}=c$.
Since
$$
\int_{\mathbb{R}^{N}}(-\Delta)^{s/2}v_{n}(-\Delta)^{s/2}\varphi dx-\lambda_{n}\int_{\mathbb{R}^{N}}v_{n}\varphi dx-\int_{\mathbb{R}^{N}}a(x)|v_{n}|^{p-2} v_{n}\varphi dx=o(1)\|\varphi\|
$$
for every $\varphi \in H^{s}(\mathbb{R}^{N})$, then $\{v_{n}\}$ is a Palais-Smale sequence for $F_{\lambda}$ at level  $m_{a,c}-\frac{\lambda}{2}c^{2}$. Therefore, by Lemma \ref{split}, we have
$$
v_{n}=v+\sum_{j=1}^{k} u^{j}(\cdot-y_{n}^{j})+o(1),
$$
with $u^{j}$ being solutions to
$$
(-\Delta)^{s}u^{j}=\lambda u^{j}+|u^{j}|^{p-2} u^{j}
$$
and $|y_{n}^{j}| \rightarrow \infty$. We note that, if $k=0$, then $v_{n} \rightarrow v$ strongly in  $H^{s}(\mathbb{R}^{N})$, hence $\|v\|_{2}=c$ and we are done, thus we can assume that $k \geq 1$, or equivalently $\rho:=\|v\|_{2}<c$.

First, we exclude the case $v=0$. In fact, if $v=0$ and $k=1$, we would have $u^{1}>0$ and  $\|u^{1}\|_{2}=c$, so that \eqref{s3} gives $m_{a,c}=m_{c}$, which is not possible due to Lemma \ref{inf}. On the other hand, if  $k \geq 2$, by Lemma \ref{split}, we have
$$
F_{\lambda}(v_{n})\to\sum_{j=1}^{k}F_{\infty,\lambda}(u^{j})=\sum_{j=1}^{k}F_{\infty}(u^{j})
-\sum_{j=1}^{k}\frac{\lambda}{2}\|u^{j}\|^2_{2},\quad
c^{2}=\sum_{j=1}^{k}\|u^{j}\|_{2}^{2}.
$$
Let $\alpha^2_{j}=\|u^{j}\|_{2}^{2}$, since $F_{\infty}(u^{j}) \geq m_{\alpha_{j}}$ and
\begin{equation}\label{proof1}
m_{\alpha}>m_{\beta},\quad\text {if $\alpha<\beta$},
\end{equation}
we have $2m_{c}\leq m_{a,c}$, which contradicts Lemma \ref{sup}.

Therefore, from now on we assume that $v \neq 0$. From $F(v_{n}) \rightarrow m_{a,c}$, we deduce that
$$
m_{a,c}-\frac{\lambda}{2}c^{2}=F(v)-\frac{\lambda}{2} \rho^{2}+\sum_{j=1}^{k} F_{\infty}(u^{j})-\frac{\lambda}{2}\sum_{j=1}^{k}\alpha_{j}^{2}
$$
Using $F_{\infty}(u^{j}) \geq m_{\alpha_{j}}$, \eqref{proof1} and
$$
c^{2}=\rho^{2}+\sum_{j=1}^{k}\alpha_{j}^{2},
$$
 we have
$$
2m_{c}>m_{a,c}\geq F(v)+\sum_{j=1}^{k} m_{\alpha_{j}} \geq F(v)+m_{\alpha} \geq F(v)+m_{c},
$$
where $\alpha=\max\limits_{j} \alpha_{j}$. Moreover, using the equation for $v$, it is easy to check that
$$
F_{\lambda}(v)=\max_{t>0}F_{\lambda}(tv).
$$
Let $\beta>0$ be such that
$$
\lambda_{\beta}=\lambda,
$$
according to \eqref{lambdac}. By Appendix in \cite{SBP}, $w_{\beta}$ satisfies the limit equation with multiplier $\lambda$ and $\beta\leq\alpha$. %(actually, it is possible to show that $\beta=\alpha$ and $w=w_{\beta}$, even though we do not need to use this).
Using Lemma \ref{sup} and the above arguments, we have
$$
\begin{aligned}
2m_{c}&>m_{a,c} \geq m_{\alpha}+F(v)=m_{\alpha}+F_{\lambda}(v)+\frac{\lambda}{2}\rho^{2}\\
&=m_{\alpha}+\max_{t>0}F_{\lambda}(tv)+\frac{\lambda}{2}\rho^{2}\\
&\geq m_{\alpha}+\max_{t>0}F_{\infty,\lambda}(tv)+\frac{\lambda}{2}(c^{2}-\alpha^{2})\\
&\geq m_{\alpha}+F_{\infty,\lambda}(w_{\beta})+\frac{\lambda}{2}(c^{2}-\alpha^{2}) \\
&\geq m_{\alpha}+m_{\beta}+\frac{\lambda}{2}(c^{2}-\alpha^{2}-\beta^{2}).
\end{aligned}
$$
Since $m_{\beta}\geq m_{\alpha}>m_{c}$ and $\lambda<0$, we deduce that
$$
c^{2}-\alpha^{2}-\beta^{2}>0.
$$
Using Lemma \ref{psbound} and \eqref{mm}, we obtain
$$
\begin{aligned}
2m_{c}&>\frac{c^{\theta}}{\alpha^{\theta}}m_{c}+\frac{c^{\theta}}{\beta^{\theta}}m_{c}+\frac{\lambda}{2}(c^{2}-\alpha^{2}-\beta^{2})\\
&>\frac{c^{\theta}}{\alpha^{\theta}}m_{c}+\frac{c^{\theta}}{\beta^{\theta}}m_{c}-\frac{\lambda_{0}m_{c}}{2c^{2} }(c^{2}-\alpha^{2}-\beta^{2}),
\end{aligned}
$$
where $\lambda_{0}:=\frac{\delta_{0}}{m_{c}}.$ Hence, one gets
\begin{equation}\label{proof2}
2+\frac{\lambda_{0}}{2}>\frac{c^{\theta}}{\alpha^{\theta}}+\frac{c^{\theta}}{\beta^{\theta}}+\frac{\lambda_{0}}{2}(\frac{\alpha^{2}}{c^{2}}+\frac{\beta^{2}}{c^{2}}).
\end{equation}

Note that the following inequality holds for $A>\theta$ (see \cite{TBR})
$$
\min\Big\{x^{-\theta/2}+y^{-\theta/2}+A(x+y):x,y>0,x+y\leq 1\Big\}\geq\frac{3}{2}\theta+2,
$$
%holds if $A>\theta$ (see \cite{TBR}).
and by using $(A_{3})$ and \eqref{p15}, we have
$$
\lambda_{0}=\frac{\delta_{0}}{m_{c}}>\frac{4(p(2s-N)+2N)}{N(p-2)-4s}=2\theta,
$$
so \eqref{proof2} implies that
$$
2+\frac{\lambda_{0}}{2}>\frac{3\theta}{2}+2,
$$
which is a contradiction  to \eqref{p15}.\\

{\bf Proof of Theorem \ref{th2}:}
The strategy is similar to the proof of Theorem \ref{th1}. We observe that the results obtained in Section 3 excepting Lemma \ref{rh} and Lemma \ref{sup} still hold true without any changes in the proof.

{\bf First, we claim that if $(A_{6})$ holds, then  $m_{a,c}<2m_{c}$.}

From the definition of $m_{c}$, we have

\begin{equation*}
m_{c}=\frac{1}{2}\|(-\Delta)^{s/2}w_{c}\|_{2}^{2}-\frac{1}{p}\|w_{c}\|_{p}^{p}
\end{equation*}
and
\begin{equation*}
s\|(-\Delta)^{s/2}w_{c}\|_{2}^{2}-\frac{N(p-2)}{2p}\int_{\mathbb{R}^{N}}a(x)|w_{c}|^{p}dx=0,
\end{equation*}
which implies that
\begin{equation}\label{e1}
\|(-\Delta)^{s/2}w_{c}\|_{2}^{2}=\frac{2N(p-2)}{N(p-2)-4s}m_{c},\quad\|w_{c}\|_{p}^{p}=\frac{4sp}{N(p-2)-4s}.
\end{equation}
Note that 
\begin{equation}\label{e2}
\begin{aligned}
m_{a,c}&\leq\max_{h\in \mathbb{R}, y\in\mathbb{R}^{N}}\Big\{F_{\infty}(h\star w_{c})+\frac{1}{p}\int_{\mathbb{R}^{N}}(1-a(x))|(h \star w_{c})(x-y)|^{p}dx\Big\}\\
&\leq \max_{h\in\mathbb{R}}\Big\{F_{\infty}(h\star w_{c})+\frac{e^{2sh}}{p}\|1-a(x)\|_{t_{1}}\|w_{c}\|_{t_{2}p}^{p}\Big\} \\
&=\max_{h\in\mathbb{R}}\Big(e^{2sh}\Big(\frac{1}{2}\|(-\Delta)^{s/2}w_{c}\|_{2}^{2}+\frac{1}{p}\|1-a(x)\|_{t_{1}}\|w_{c}\|_{t_{2}p}^{p}\Big)-\frac{e^{\frac{p-2}{2}Nh}}{p}\|w_{c}\|_{p}^{p}\Big).
\end{aligned}
\end{equation}
Differentiating with respect to $h$, then there exists $h_{0}$ such that
\begin{equation*}
2se^{2sh_{0}}\Big(\frac{1}{2}\|(-\Delta)^{s/2}w_{c}\|_{2}^{2}+\frac{1}{p}\|1-a(x)\|_{t_{1}}\|w_{c}\|_{t_{2}p}^{p}\Big)
-\frac{N(p-2)}{2p}e^{\frac{p-2}{2}Nh_{0}}\|w_{c}\|_{p}^{p}=0.
\end{equation*}
Hence, we obtain that
\begin{equation}\label{e3}
e^{\frac{p-2}{2}Nh_{0}}=\left\{\frac{N(p-2)-4s}{N(p-2)m_{c}}\Big(\frac{1}{2}\|(-\Delta)^{s/2}w_{c}\|_{2}^{2}+\frac{1}{p}\|1-a(x)\|_{t_{1}}\|w_{c}\|_{t_{2}p}^{p}\Big)\right\}^{\frac{N(p-2)}{N(p-2)-4s}}.
\end{equation}
Combining $(A_{6})$ with \eqref{e1} and \eqref{e3}, we obtain
\begin{equation*}
e^{\frac{p-2}{2}Nh_{0}}<2.
\end{equation*}
From \eqref{e1} and \eqref{e2}, we have
\begin{equation*}
m_{a,c}\leq e^{\frac{p-2}{2}Nh_{0}}m_{c}.
\end{equation*}
This ends the proof.

{\bf Second, if $(A_{6})$ holds, then Lemma \ref{rh} still holds true.}

  Same as the proof of \eqref{r2}, we can fix $h_{1}<0<h_{2}$ such that
\begin{equation}\label{f1}
\sup_{(y,h)\in \mathbb{R}^{N}\times\{h_{1}, h_{2}\}}F(h\star w_{c}(\cdot-y))\leq m_{c}.
\end{equation}
For all $(y, h)\in\mathbb{R}^{N}\times\mathbb{R}$, we have
\begin{equation*}\label{f2}
\begin{aligned}
&\limsup_{|y|\to +\infty}\max_{h\in[h_{1}, h_{2}]}F(h\star w_{c}(x-y))\\
=&\limsup_{|y|\to +\infty}\max_{h\in[h_{1}, h_{2}]}\Big(F_{\infty}(h\star w_{c})+\frac{1}{p}\int_{\mathbb{R}^{N}}(1-a(x))|(h \star w_{c})(x-y)|^{p}dx\Big).
\end{aligned}
\end{equation*}
Since $w_{c}\in L^{t_{2}p}(\mathbb{R}^{N})$ and $(1-a(x))\in L^{t_{1}}(\mathbb{R}^{N})$, there exists $R>0$ large enough for any $\varepsilon>0$ such that 
\begin{equation*}
\|w_{c}\|_{L^{t_{2}p}(B_{R}(y))}^{p}<\varepsilon,\quad
\|1-a(x)\|_{L^{t_{1}}(\mathbb{R}^{N} \backslash B_{R}(0))}<\varepsilon, \quad \text{for}\ |y|\rightarrow\infty.
\end{equation*}
Therefore, 
\begin{equation*}\label{f3}
\begin{aligned}
&\max_{h\in[h_{1}, h_{2}]}\int_{\mathbb{R}^{N}}(1-a(x))|(h \star w_{c})(x-y)|^{p}dx\\
=&\max_{h\in[h_{1}, h_{2}]}\left(\int_{B_{R}(0)}(1-a(x))|(h \star w_{c})(x-y)|^{p}dx+\int_{\mathbb{R}^{N}\backslash B_{R}(0)}(1-a(x))|(h \star w_{c})(x-y)|^{p}dx\right)\\
\leq&\|1-a(x)\|_{t_{1}}\max_{h\in[h_{1}, h_{2}]}e^{2sh}\|w_{c}\|_{L^{t_{2}p}(B_{R}(y))}^{p}+\|1-a(x)\|_{L^{t_{1}}(\mathbb{R}^{N} \backslash B_{R}(0))}\max_{h \in[h_{1}, h_{2}]}\|h\star w_{c}\|_{t_{2}p}^{p}\rightarrow0.\\
\end{aligned}
\end{equation*}
Using $(A_{3})$, $(A_{4})$ and $(A_{6})$,
$$
\limsup_{|y|\to +\infty}\max_{h\in[h_{1}, h_{2}]}F(h\star w_{c}(x-y))\leq m_{c}.
$$
Moreover,
$$
\limsup_{|y|\to +\infty}F(w_{c}(x-y))\ge \limsup_{|y|\to +\infty}F_\infty(w_{c}(x-y))=F_\infty(w_{c})=m_c,
$$
which implies that
\begin{equation}\label{f4}
\limsup_{|y|\to +\infty}\max_{h\in[h_{1},h_{2}]}F(h\star w_{c}(x-y))= m_{c}.
\end{equation}
This together with \eqref{f1} and \eqref{f4} yields that Lemma \ref{rh} holds.

%%%%%%%%%%%%%%%%%%%%%%%%%%%%%%%%%%%%%%%%%%%%%%%%%%%%%%%%%%%%%%%%%%%%%%%%%%%%%%%%%%%%%%%%%%%%%%%%%%%%%%%%%%%%%%%%%%%%%%%%%%%%%%%%%%%%%%%%%%%%%%%%%%%%%%%%%%%%%%%%%%%%
\end{document}